\title{On certain mean values of the double zeta-function}
\author{Soichi Ikeda}
\author{Kaneaki Matsuoka}
\author{Yoshikazu Nagata}
\address{Graduate School of Mathematics, Nagoya University,
Furocho, Chikusaku, Nagoya 464-8602, Japan}
\email{m10004u@math.nagoya-u.ac.jp}
\email{m10041v@math.nagoya-u.ac.jp}
\email{m10035y@math.nagoya-u.ac.jp}
\keywords{double zeta function, mean value}
\subjclass[2010]{11M32,11M06}
\theoremstyle{plain}
\newtheorem{theorem}{Theorem}
\newtheorem{lemma}{Lemma}
\newtheorem{corollary}{Corollary}
\theoremstyle{remark}
\newtheorem{remark}{Remark}
\numberwithin{theorem}{section}
\numberwithin{lemma}{section}
\numberwithin{corollary}{section}
\numberwithin{remark}{section}
\numberwithin{equation}{section}
\begin{document}
\begin{abstract}
In this paper we discuss three types of mean values
of the Euler double zeta function. In order to get results 
we introduce three approximate formulas for this function.
\end{abstract}

\maketitle

\section{Introduction}
Let $s_1 = \sigma_1 + it_1$ and $s_2 = \sigma_2 + it_2$ with
$\sigma_1, \sigma_2, t_1, t_2 \in \mathbb{R}$. The Euler double
zeta-function is defined by
\begin{equation}  \label{eq_db_zeta}
  \zeta_2(s_1, s_2) = \sum_{m=1}^{\infty} \frac{1}{m^{s_1}}
    \sum_{n=1}^{\infty} \frac{1}{(m+n)^{s_2}}.
\end{equation}
This series is absolutely convergent for $\sigma_2 > 1$ and
$\sigma_1 + \sigma_2 > 2$ \cite{matsumoto}.
We can continue $\zeta_2(s_1,s_2)$ meromorphically
to $\mathbb{C}^2$, which is holomorphic in
\[ \{ (s_1,s_2) \in \mathbb{C}^2 \mid  s_2 \neq 1,
s_1 + s_2 \notin \{ 2, 1, 0, -2, -4, -6, \dots \} \} \]
as was proved in \cite{aki_ega_tani}.
(The first study of the analytic continuation
of $\zeta_2(s_1, s_2)$ is Atkinson's work in \cite{atkinson}.
Akiyama, Egami and Tanigawa studied the analytic continuation
of not only $\zeta_2(s_1, s_2)$ but also more general multiple
zeta-functions in \cite{aki_ega_tani}.
Zhao also obtained the continuation in \cite{zhao} independently.)

The analytic properties of $\zeta_2(s_1,s_2)$ were studied by
various authors (for example, Kiuchi-Tanigawa-Zhai \cite{kiu_tani_zhai},
Matsumoto \cite{matsumoto}, \cite{matfeq}). Recently, Matsumoto
and Tsumura studied the mean values
\begin{equation} \label{mt_intt2}
  \int_2^T |\zeta_2(s_1, s_2)|^2 dt_2,
\end{equation}
where $s_1$ is a fixed complex number. This is the first study
of the mean values of $\zeta_2(s_1,s_2)$. In this paper we study
(\ref{mt_intt2}) in the regions which are not covered in the
work of Matsumoto and Tsumura and introduce new types of
mean values of $\zeta_2(s_1,s_2)$. This paper is inspired by
Matsumoto and Tsumura \cite{matsu_tsumu}.

In this paper we prove the following theorems.

\begin{theorem}  \label{th_intt1}
Let $s_1 = \sigma_1 + it_1, s_2 = \sigma_2 + it_2 \in \mathbb{C}$,
$T \ge 2$ and
\[ I^{[1]}(T) = \int_2^T |\zeta_2(s_1, s_2)|^2 dt_1. \]
Assume that when $t_1$ moves from $2$ to $T$,
the point $(s_1, s_2) \in \mathbb{C}^2$ does not
encounter the singularities of $\zeta_2(s_1, s_2)$.
In the case $\sigma_1 + \sigma_2 > 2$, we have
\[ I^{[1]}(T) = \zeta_2^{[1]}(2 \sigma_1, s_2) T + O(1), \]
where the implied constant depends on $s_2$ and
$\zeta_2^{[1]}(2 \sigma_1, s_2)$ is a series which converges
$\sigma_1 + \sigma_2 > 3/2$ (we define $\zeta_2^{[1]}(\sigma_1, s_2)$
in the next setcion).
In the case $3/2 < \sigma_1 + \sigma_2 \le 2$, we have
\[ I^{[1]}(T) = \zeta_2^{[1]}(2 \sigma_1, s_2) T +
     \begin{cases}
       O(T^{4 - 2 \sigma_1 - 2 \sigma_2}) &
         (3/2 < \sigma_1 + \sigma_2 < 2), \\
       O((\log T)^2) & ( \sigma_1 + \sigma_2 = 2).
     \end{cases} \]
In the case $\sigma_1 + \sigma_2 = 3/2$, we have
\[ I^{[1]}(T) =| s_2 - 1 |^{-2}T \log T + O(T).\]
\end{theorem}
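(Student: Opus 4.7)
The plan is to base the proof on one of the approximate formulas for $\zeta_2(s_1,s_2)$ promised in the abstract (to be established in the next section), namely a formula that truncates the outer sum $\sum_m m^{-s_1}f(m,s_2)$ at some parameter $X$ depending on $T$, where $f(m,s_2):=\sum_{n=1}^{\infty}(m+n)^{-s_2}$. After squaring and integrating in $t_1$ one applies the standard identity
\[
\int_2^T\!\left(\frac{m_2}{m_1}\right)^{it_1}\!dt_1=\begin{cases}T-2,&m_1=m_2,\\[2pt] O\!\left(1/|\log(m_1/m_2)|\right),&m_1\neq m_2,\end{cases}
\]
so that the diagonal $m_1=m_2$ yields $\zeta_2^{[1]}(2\sigma_1,s_2)(T-2)$ whenever the defining series for $\zeta_2^{[1]}$ converges (i.e.\ when $\sigma_1+\sigma_2>3/2$), and everything else must be absorbed into the error.

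In the region $\sigma_1+\sigma_2>2$ the double series \eqref{eq_db_zeta} is absolutely convergent and no truncation is required: the diagonal is the full main term, and the off-diagonal is bounded using $|f(m,s_2)|\ll_{s_2}m^{\max(0,1-\sigma_2)}$ together with $|\log(m_1/m_2)|^{-1}\ll \max(m_1,m_2)/|m_1-m_2|$. A dyadic split $|m_1-m_2|\le \min(m_1,m_2)$ versus $|m_1-m_2|>\min(m_1,m_2)$ shows that the resulting sum converges precisely when $\sigma_1+\sigma_2>2$, giving the claimed $O(1)$. For the intermediate strip $3/2<\sigma_1+\sigma_2\le 2$ I would insert the approximate formula $\zeta_2(s_1,s_2)=\sum_{m\le X}m^{-s_1}f(m,s_2)+R(s_1,s_2;X)$ with $X\asymp T$. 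The diagonal still gives $\zeta_2^{[1]}(2\sigma_1,s_2)T$ up to the tail $T\sum_{m>X}m^{-2\sigma_1}|f(m,s_2)|^2\ll TX^{3-2\sigma_1-2\sigma_2}$ (respectively $T\log X$ at $\sigma_1+\sigma_2=2$); the off-diagonal, the cross-term $\int |\sum_{m\le X}\cdots|\,|R|\,dt_1$, and $\int |R|^2\,dt_1$ are handled by Cauchy--Schwarz and the pointwise bound for $R$ supplied by the approximate formula, and the choice $X\asymp T$ balances the errors to $O(T^{4-2\sigma_1-2\sigma_2})$ or $O((\log T)^2)$.

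On the critical line $\sigma_1+\sigma_2=3/2$ the series defining $\zeta_2^{[1]}(2\sigma_1,s_2)$ diverges, so the main term itself has to be read off from the truncated diagonal. The Euler--Maclaurin expansion $f(m,s_2)=m^{1-s_2}/(s_2-1)+O(m^{-\sigma_2})$ gives $|f(m,s_2)|^2=|s_2-1|^{-2}m^{2-2\sigma_2}+O(m^{1-2\sigma_2})$, so with $2\sigma_1+2\sigma_2-2=1$ one finds
\[
\sum_{m\le X}\frac{|f(m,s_2)|^2}{m^{2\sigma_1}}=\frac{1}{|s_2-1|^2}\sum_{m\le X}\frac{1}{m}+O(1)=\frac{\log X}{|s_2-1|^2}+O(1).
\]
Taking $X\asymp T$, the diagonal contributes $|s_2-1|^{-2}T\log T+O(T)$, and the remaining off-diagonal and remainder terms are absorbed into $O(T)$.

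The main obstacle will be the uniform control of the off-diagonal contribution and of the cross-term involving $R$ as $\sigma_1+\sigma_2$ approaches $3/2$ from above, where the naive estimates degenerate; this requires both the sharp form of the approximate formula (with explicit polynomial decay of $R$ in $t_1$ and $X$) and a careful dyadic treatment of $|\log(m_1/m_2)|^{-1}$ near the diagonal, with the parameter $X\asymp T$ balancing the competing errors.
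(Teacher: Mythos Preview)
Your overall plan --- truncate via an approximate formula $\zeta_2(s_1,s_2)=\sum_{m\le N}m^{-s_1}f(m,s_2)+R$, extract the diagonal as the main term, and use the Euler--Maclaurin expansion of $f(m,s_2)$ to read off the coefficient on the line $\sigma_1+\sigma_2=3/2$ --- is exactly the paper's strategy. The execution differs in two places, and in each your version would give a weaker error term than the one stated.

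First, the paper never estimates the off-diagonal via the trivial bound $\int_2^T(m_2/m_1)^{it_1}\,dt_1=O(1/|\log(m_1/m_2)|)$. It writes the (truncated) $\zeta_2$ as $\sum_m a_m m^{-it_1}$ with $a_m=m^{-\sigma_1}\bigl(\zeta(s_2)-\sum_{n\le m}n^{-s_2}\bigr)$ and applies the Montgomery--Vaughan mean value theorem for Dirichlet polynomials (Lemma~\ref{lem_mth_dpoly}), which yields $T\sum_m|a_m|^2+O\bigl(\sum_m m|a_m|^2\bigr)$ in one stroke. Your direct off-diagonal bound, after using $1/|\log(m_1/m_2)|\ll m_1/|m_1-m_2|$ near the diagonal, produces a harmonic sum $\sum_{1\le r\le m}r^{-1}$ and hence an extra factor $\log T$; for $3/2<\sigma_1+\sigma_2<2$ you would get $O(T^{4-2\sigma_1-2\sigma_2}\log T)$ rather than the claimed $O(T^{4-2\sigma_1-2\sigma_2})$.

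Second, in the range $3/2\le\sigma_1+\sigma_2\le 2$ the paper first proves the asymptotic on dyadic windows $[T,2T]$ (with $N\asymp T$) and only then sums. On such a window the cross term is handled by $\int_T^{2T}|I_1R|\,dt_1\le \max_{[T,2T]}|I_1|\cdot\int_T^{2T}|R|\,dt_1\ll N^{2-\sigma_1-\sigma_2}\cdot N^{2-\sigma_1-\sigma_2}$, and the geometric sum over dyadic blocks introduces no loss. Your proposed Cauchy--Schwarz bound on $\int_2^T|I_1||R|\,dt_1$ gives only $T^{5/2-\sigma_1-\sigma_2}$, which exceeds $T^{4-2\sigma_1-2\sigma_2}$ throughout $3/2<\sigma_1+\sigma_2\le 2$.

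Both points are quantitative rather than structural: the main terms in all three regimes survive your approach, but to match the errors in the statement you should replace the hand-made off-diagonal estimate by Lemma~\ref{lem_mth_dpoly} and work dyadically in $t_1$ before passing to $[2,T]$.
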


\begin{theorem}  \label{th_intt2}
Let $s_1 = \sigma_1 + it_1, s_2 = \sigma_2 + it_2 \in \mathbb{C}$,
$T \ge 2$ and
\[ I^{[2]}(T) = \int_{2}^{T} | \zeta_2(s_1,s_2) |^2 dt_2. \]
Assume that when $t_2$ moves from $2$ to $T$,
the point $(s_1, s_2) \in \mathbb{C}^2$ does not
encounter the singularities of $\zeta_2(s_1, s_2)$.
In the case $\sigma_2 > 1$ and $\sigma_1 + \sigma_2 > 2$, we have
\[ I^{[2]}(T) = \zeta_2^{[2]}(s_1, 2 \sigma_2) T + O(1), \]
where the implied constant depends on $s_1$ and
$\zeta_2^{[2]}(s_1, 2 \sigma_2)$ is a series which converges
$\sigma_1+\sigma_2 > 3/2$ and $\sigma_2 > 1/2$
($\zeta_2^{[2]}(s_1, \sigma_2)$ is used in \cite{matsu_tsumu}
and we show the definition of $\zeta_2^{[2]}(s_1, \sigma_2)$
in the next section).
In the case $\sigma_1 > 1$ and $1/2 < \sigma_2 \le 1$,
we have
\[ I^{[2]}(T) = \zeta_2^{[2]}(s_1, 2 \sigma_2) T +
     \begin{cases}
       O(T^{2 - 2 \sigma_2} )  & (\sigma_2 \neq 1), \\
       O((\log T)^2)  & (\sigma_2 = 1).
     \end{cases} \]
In the case $\sigma_1 \le 1$, $3/2 < \sigma_1 + \sigma_2 \le 2$
and $s_1 \neq 1$, we have
\[ I^{[2]}(T) = \zeta_2^{[2]}(s_1, 2 \sigma_2) T +
     \begin{cases}
       O(T^{4 - 2 \sigma_1 - 2 \sigma_2} ) &
         ( \sigma_1 + \sigma_2 \neq 2), \\
       O((\log T)^2) &
         (\sigma_1 + \sigma_2 = 2).
     \end{cases} \]
In the case $s_1 = 1$ and $1/2 < \sigma_2 \le 1$, we have
\[ I^{[2]}(T) = \zeta_2^{[2]}(s_1, 2 \sigma_2) T +
     \begin{cases}
       O(T^{2 - 2 \sigma_2} (\log T)^2) & (\sigma_2 \neq 1), \\
       O((\log T)^4) & (\sigma_2 = 1).
     \end{cases} \]
In the case $\sigma_1 > 1$ and $\sigma_2 = 1/2$, we have
\[ I^{[2]}(T) = |\zeta(s_1)|^2 T \log T + O(T). \]
In the case $\sigma_1 + \sigma_2 = 3/2$ and $\sigma_2 > 1/2$,
we have
\[ I^{[2]}(T) = |s_1 - 1|^{-2} T \log T + O(T). \]
In the case $\sigma_2 = 1/2$, $\sigma_1 = 1$ and $s_1 \neq 1$,
we have
\[ I^{[2]}(T) = ( |s_1 - 1|^{-2} + |\zeta(s_1)|^2 ) T \log T + O(T).\]
In the case $\sigma_2 = 1/2$ and $s_1 = 1$, we have
\[ I^{[2]}(T) = \frac{T (\log T)^3}{3} + O(T (\log T)^2). \]
\end{theorem}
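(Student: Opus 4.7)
The plan is to use the approximate formula for $\zeta_2(s_1,s_2)$ tailored to the $t_2$-aspect that the paper announces (most likely obtained by applying Euler--Maclaurin summation to the inner sum $\sum_{n\ge 1}(m+n)^{-s_2}$ with a cutoff depending on $|t_2|$), writing $\zeta_2(s_1,s_2) = D(s_1,s_2) + E(s_1,s_2)$ where $D$ is a finite double sum and $E$ carries an explicit power-saving bound uniform for $t_2\in[2,T]$. Expanding $|\zeta_2|^2 = |D|^2 + 2\,\mathrm{Re}(D\overline{E}) + |E|^2$ and integrating, a Cauchy--Schwarz argument pins down the cross term using $\int_2^T |E|^2\,dt_2$, so the whole problem reduces to evaluating $\int_2^T |D|^2\,dt_2$ and estimating $\int_2^T |E|^2\,dt_2$.

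Expanding the square and interchanging summation with integration yields integrals of the form $\int_2^T ((m+n)/(m'+n'))^{-it_2}\,dt_2$, which equal $T-2$ on the diagonal $m+n=m'+n'$ and are bounded by $|\log((m+n)/(m'+n'))|^{-1}$ off the diagonal. Grouping the diagonal contributions by $N=m+n=m'+n'$ produces $T$ times
\[
\sum_{N\ge 2}\frac{1}{N^{2\sigma_2}}\left|\sum_{m=1}^{N-1}\frac{1}{m^{s_1}}\right|^2,
\]
which is precisely $\zeta_2^{[2]}(s_1,2\sigma_2)$ as defined in the next section. Standard estimates show the off-diagonal is $O(1)$ or at worst logarithmic in the region where the diagonal series converges absolutely, and the error from the approximate formula accounts for the stated remainders $O(T^{2-2\sigma_2})$, $O(T^{4-2\sigma_1-2\sigma_2})$ and $O((\log T)^2)$ matched case by case.

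The case breakdown reflects the convergence threshold of this diagonal series. For $\sigma_1>1$ the inner sum is bounded by $\zeta(\sigma_1)$ and convergence only needs $\sigma_2>1/2$; for $\sigma_1\le 1$, $s_1\ne 1$, the inner sum behaves like $N^{1-s_1}/(1-s_1)+O(1)$ so the diagonal converges in $\sigma_1+\sigma_2>3/2$; for $s_1=1$ the extra $\log N$ in the inner sum produces the stated $(\log T)^2$ and $(\log T)^4$ enhancements. At the convergence boundaries $\sigma_2=1/2$ and $\sigma_1+\sigma_2=3/2$, a direct tail evaluation of $\sum_{N\le T}N^{-2\sigma_2}|\sum_{m<N}m^{-s_1}|^2$ extracts the leading coefficients $|\zeta(s_1)|^2$ and $|s_1-1|^{-2}$ respectively; when both boundaries coincide with $s_1\ne 1$ the two contributions simply add, and when all three singularities coincide ($s_1=1$, $\sigma_2=1/2$) the tail becomes $\sum_{N\le T}(\log N)^2/N\sim(\log T)^3/3$, giving the final case.

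The principal obstacle will be producing the approximate formula sharply enough so that the error integrals $\int_2^T D\overline{E}\,dt_2$ and $\int_2^T|E|^2\,dt_2$ simultaneously fit inside every stated remainder across this wide range of $(\sigma_1,\sigma_2)$; a secondary technicality is interpreting the coefficient of $T$ as the value of the meromorphic continuation of $\zeta_2^{[2]}(s_1,2\sigma_2)$ across its polar lines $\sigma_2=1/2$ and $\sigma_1+\sigma_2=3/2$, so that subtracting the divergent finite-sum version of the diagonal produces exactly the stated $T\log T$ leading behaviour and no spurious lower-order residue.
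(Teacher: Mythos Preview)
Your strategy is broadly the paper's, but two points need correction.

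First, the paper does not expand the square and bound off-diagonal integrals by $|\log(n_1/n_2)|^{-1}$; instead it writes the truncated double sum as a Dirichlet polynomial $\sum_{n\le N} a_n n^{-it_2}$ with $a_n=n^{-\sigma_2}\sum_{m<n}m^{-s_1}$ and invokes the Montgomery--Vaughan mean value theorem (Lemma~\ref{lem_mth_dpoly}), which gives
\[
\int_T^{2T}\Bigl|\sum_{n\le N}a_n n^{-it_2}\Bigr|^2\,dt_2
= T\sum_{n\le N}|a_n|^2 + O\Bigl(\sum_{n\le N}n|a_n|^2\Bigr).
\]
This matters for your bookkeeping: you claim the off-diagonal is $O(1)$ or logarithmic wherever the diagonal series converges and that the stated remainders $O(T^{2-2\sigma_2})$, $O(T^{4-2\sigma_1-2\sigma_2})$, $O((\log T)^2)$ come from the approximate-formula error $E$. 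In fact it is the other way round. For instance with $\sigma_1>1$ and $1/2<\sigma_2<1$ one has $\sum_{n\le N}n|a_n|^2\asymp N^{2-2\sigma_2}$, which is exactly the theorem's remainder once $N\asymp T$, while the contribution of $E$ is smaller by a factor $T^{-1}$. Your direct-expansion route would moreover cost an extra $\log T$ via $\sum_{r\ge 1}r^{-1}$, so as written it does not reach the stated error terms; you need Hilbert's inequality (equivalently Lemma~\ref{lem_mth_dpoly}) to remove that logarithm.

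Second, the paper does not let the cutoff depend on $t_2$. It works on dyadic blocks $[T,2T]$ with a \emph{fixed} $N=[T]+1$, applies the approximate formula (Lemma~\ref{lem_approx_t2}, obtained by swapping the order of summation and applying Euler--Maclaurin to the inner $m$-sum, not to $\sum_n(m+n)^{-s_2}$), evaluates $\int_T^{2T}$, and then telescopes over $T/2^k$ to recover $\int_2^T$ (Remark~\ref{rem_sum_tlogt}). Your variable-length $D$ forces integrals over $[\max(n_1,n_2),T]$, which is the harder route the paper reserves for Theorem~\ref{th_intt}. The remaining boundary cases ($\sigma_2=1/2$, $\sigma_1+\sigma_2=3/2$, $s_1=1$) are handled exactly as you describe, by expanding $|a_n|^2$ via Lemma~\ref{lem_em} or Corollary~\ref{cor_em} and summing.
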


\begin{theorem}  \label{th_intt}
Let $s_1 = \sigma_1 + it, s_2 = \sigma_2 + it \in \mathbb{C}$,
$T \ge 2$ and
\[ I^{\Box}(T) = \int_2^T |\zeta_2(s_1, s_2)|^2 dt. \]
In the case $\sigma_2 > 1$ and $\sigma_1 + \sigma_2 > 2$, we have
\[ I^{\Box}(T) = \zeta_2^{\Box} (\sigma_1, \sigma_2) T + O(1), \]
where $\zeta_2^{\Box}(\sigma_1, \sigma_2)$ is a series which converges
if and only if $\sigma_2 > 1/2$ and $\sigma_1 + \sigma_2 > 1$
(we define $\zeta_2^{\Box}(\sigma_1, \sigma_2)$ in the next section).
In the case $\sigma_1 > 1$ and $1/2 < \sigma_2 \le 1$, we have
\[ I^{\Box}(T) = \zeta_2^{\Box} (\sigma_1, \sigma_2) T +
   O(T^{2 - 2 \sigma_2 + \epsilon}) + O(T^{1/2}) \]
for sufficiently small $\epsilon > 0$. In the case
$\sigma_1 \le 1$ and $3/2 < \sigma_1 + \sigma_2 \le 2$, we have
\[ I^{\Box}(T) = \zeta_2^{\Box} (\sigma_1, \sigma_2) T +
   O(T^{4 - 2 \sigma_1 - 2 \sigma_2 + \epsilon}) +
   O(T^{1/2}) \]
for sufficiently small $\epsilon > 0$.
In the case $\sigma_1 > 1$ and $\sigma_2 = 1/2$, we have
\[ I^{\Box}(T) \asymp T \log T. \]
\end{theorem}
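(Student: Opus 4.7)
The plan is to imitate the strategy used for Theorems~\ref{th_intt1} and \ref{th_intt2} but with the third of the approximate formulas (the one tailored to the diagonal motion $s_1 = \sigma_1+it,\ s_2=\sigma_2+it$) developed in the next section. I will write
\[
\zeta_2(s_1,s_2) = F(t) + R(t),
\]
where $F(t)$ is a finite truncated double sum $\sum' m^{-s_1}\ell^{-s_2}$ (using the change of variables $\ell = m+n$ so that the series looks like $\sum_{m<\ell} m^{-s_1}\ell^{-s_2}$) cut off at some height $Y=Y(t,T)$, and $R(t)$ has a uniform bound strong enough that $\int_2^T |R|^2\,dt \ll 1$ in each range. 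Then
\[
|\zeta_2|^2 = |F|^2 + 2\Re(\bar F R) + |R|^2,
\]
and the three integrals are handled separately.

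For the main piece $\int_2^T |F(t)|^2\,dt$, I expand the square and integrate:
\[
\int_2^T |F(t)|^2\,dt
=\sum_{\substack{m_1<\ell_1\\ m_2<\ell_2}}{}^{\!\!\!\prime}\,
\frac{1}{m_1^{\sigma_1}m_2^{\sigma_1}\ell_1^{\sigma_2}\ell_2^{\sigma_2}}
\int_2^T \left(\frac{m_2\ell_2}{m_1\ell_1}\right)^{it}\!dt,
\]
where the prime denotes the truncation. The phase is stationary exactly when $m_1\ell_1 = m_2\ell_2$, so the diagonal yields $(T-2)\,\zeta_2^{\Box}(\sigma_1,\sigma_2)$. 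Grouping the diagonal by $N=m_1\ell_1=m_2\ell_2$ and estimating the inner sum $\sum_{m\mid N,\,m\le\sqrt{N}} m^{\sigma_2-\sigma_1}$ shows convergence precisely when $\sigma_2>1/2$ and $\sigma_1+\sigma_2>1$, matching the stated convergence condition for $\zeta_2^{\Box}$. In the region $\sigma_2>1$ and $\sigma_1+\sigma_2>2$ the tail left out by truncation is absorbed into $O(1)$; in the other ranges the truncated part of the diagonal (summed against the divergent tail) contributes the polynomial terms $T^{2-2\sigma_2+\epsilon}$ or $T^{4-2\sigma_1-2\sigma_2+\epsilon}$, the $\epsilon$ coming from the bound $d(N)\ll N^{\epsilon}$.

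The off-diagonal terms are estimated using the trivial bound
\[
\left|\int_2^T\left(\frac{m_2\ell_2}{m_1\ell_1}\right)^{it}dt\right|
\le \frac{2}{|\log(m_2\ell_2/m_1\ell_1)|}.
\]
Regrouping by $N_i = m_i\ell_i$ (each contributing a divisor factor $\ll N_i^{\epsilon}$), and doing a dyadic decomposition in the ratio $N_2/N_1$ via the standard $|\log(N_2/N_1)|\gg |N_2-N_1|/N_1$ bound, produces exactly the error terms advertised. The cross term and the $R$-term are then bounded via Cauchy--Schwarz:
\[
\int_2^T |\bar F R|\,dt \le \left(\int_2^T |F|^2\right)^{1/2}\left(\int_2^T |R|^2\right)^{1/2} \ll T^{1/2},
\]
which is where the universal $O(T^{1/2})$ in the first three cases comes from. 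Finally, for $\sigma_1>1,\ \sigma_2=1/2$ the series defining $\zeta_2^{\Box}$ sits on its natural boundary of convergence; the truncated diagonal sum grows like $\log Y \asymp \log T$, so the same calculation yields $I^{\Box}(T) \asymp T\log T$.

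The main obstacle will be sharpening the off-diagonal analysis to obtain the stated exponents $4-2\sigma_1-2\sigma_2+\epsilon$ and $2-2\sigma_2+\epsilon$: because the diagonal condition is the multiplicative relation $m_1\ell_1=m_2\ell_2$ rather than $m_1=m_2,\ \ell_1=\ell_2$, a careful dyadic splitting in both $N_1,N_2$ and in $|\log(N_2/N_1)|$ is needed, and the role of the divisor function has to be tracked honestly, which is the source of the unavoidable $T^{\epsilon}$ (the reason the statement in this theorem carries $\epsilon$ where the corresponding cases of Theorems~\ref{th_intt1} and \ref{th_intt2} do not).
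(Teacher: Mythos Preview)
Your outline for the first three cases is essentially the paper's approach: the approximate formula (the paper's Lemma~\ref{lem_approx}, with truncation $N=[t]$), expanding $|A|^2$, separating diagonal ($m_1\ell_1=m_2\ell_2$) from off-diagonal, and Cauchy--Schwarz for the cross term to produce the $O(T^{1/2})$. One point worth making explicit: because the truncation height depends on $t$, after expanding the square the $t$-integral runs from $\max(n_1,n_2)$ to $T$, not from $2$ to $T$. This splits the diagonal into $S_1T-S_2$, and $S_2$ must be estimated separately (the paper does this via the terms $A_1,\ldots,A_4$). Also, the off-diagonal range $k=m_1\ell_1$ extends up to $T^2$, not just $T$; the paper tracks the constraint $k/T\le m<\sqrt{k}$ in that range, which is what keeps the bounds from degrading. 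You should expect to need that.

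There is a genuine gap in your treatment of the case $\sigma_1>1$, $\sigma_2=1/2$. Pushing the same calculation through gives diagonal contribution $\asymp T\log T$, but the off-diagonal and the $S_2$ term are only controlled by $T^{2-2\sigma_2+\epsilon}=T^{1+\epsilon}$, which dominates $T\log T$. So neither the upper nor the lower bound follows from ``the same calculation.'' The paper avoids this entirely: it uses the harmonic product relation
\[
\zeta(s_1)\zeta(s_2)=\zeta_2(s_1,s_2)+\zeta_2(s_2,s_1)+\zeta(s_1+s_2),
\]
together with $|\zeta(s_1)|\asymp 1$ (since $\sigma_1>1$) and the classical $\int_2^T|\zeta(\tfrac12+it)|^2dt\asymp T\log T$, to get $\int_2^T|\zeta(s_1)\zeta(s_2)|^2dt\asymp T\log T$. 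Then it observes that the swapped integral $\int_2^T|\zeta_2(s_2,s_1)|^2dt=O(T)$ falls under the already-proved case ($\sigma_1\le 1$, $3/2<\sigma_1+\sigma_2\le 2$, with the roles of $\sigma_1,\sigma_2$ exchanged), and $\zeta(s_1+s_2)$ is bounded; the $\asymp T\log T$ for $I^{\Box}(T)$ then follows by the triangle/Cauchy--Schwarz inequality.
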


Matsumoto and Tsumura introduced $I^{[2]}(T)$ and 
studied the cases
\begin{enumerate}
\item $\sigma_1 > 1$ and $\sigma_2 > 1$ (Theorem 1.1 of \cite{matsu_tsumu}),
\item $\sigma_1 + \sigma_2 > 2$ and $1/2 < \sigma_2 \le 1$
(Theorem 1.2 of \cite{matsu_tsumu}),
\item $1/2 < \sigma_1 < 3/2$, $1/2 < \sigma_2 \le 1$ and
$3/2 < \sigma_1 + \sigma_2 \le 2$ (Theorem 1.3 of \cite{matsu_tsumu}).
\end{enumerate}
They conjectured that when $\sigma_1 + \sigma_2 = 3/2$, the form
of the main term of the mean square formula would not be $CT$
(with a constant $C$; most probably, some log-factor would
appear)(see their conjecture (ii) in \cite{matsu_tsumu}).
Our results include the regions which Matsumoto and Tsumura
did not study and give an improvement on the error
estimate. Moreover by Theorem 1.2 we see that their conjecture
(ii) is true.

Outlines of the proof of our theorems are as follows.
We can obtain Theorem \ref{th_intt1} and Theorem \ref{th_intt2}
by using the mean value theorems for Dirichlet polynomials
and suitable approximate formulas in  each theorem
(cf. Theorem 3.1 and Theorem 6.3 in Matsumoto and Tsumura
\cite{matsu_tsumu}).
The approximate formulas used in the proof of Theorem \ref{th_intt1}
and Theorem \ref{th_intt2} are derived from the Euler-Maclaurin
formula and the simplest approximate formula to $\zeta(s)$
due to Hardy and Littlewood. On the other hand
we need a more elaborate method to get the proof of Theorem \ref{th_intt}.
In order to obtain the suitable approximate formula for
$\zeta_{2}(\sigma_{1}+it, \sigma_{2}+it)$
we need the technique of Kiuchi and Tanigawa \cite{kiu_tani}, which enables us
to get good estimates of the error terms in the Euler-Maclaurin formula.

In Theorem \ref{th_intt1} (resp. Theorem \ref{th_intt2})
we regard $s_{2}$ (resp. $s_{1}$) as a constant term.
On the other hand, from the study of Kiuchi, Tanigawa and Zhai
\cite{kiu_tani_zhai}, we know that the behavior of
$|\zeta_2(s_1, s_2)|$ depends on both $s_1$ and $s_2$ strongly.
Therefore it is also important to consider a mean value
which depends on both $s_1$ and $s_2$.

From Theorem \ref{th_intt1} and Theorem \ref{th_intt2} we may expect that
the behavior of $\zeta_{2}(s_{1},s_{2})$ in the region
$\sigma_{1}+\sigma_{2}=3/2$
is special (Matsumoto and Tsumura conjectured that
$\sigma_{1}+\sigma_{2}=3/2$ might be the double analogue of
the critical line of the Riemann zeta-function
(see Remark 1.6 in \cite{matsu_tsumu})).
The error terms in Theorem \ref{th_intt} support their conjecture.
However, we can take a different point of view.
For the Riemann zeta function $\zeta(\sigma + it)$, we know that
\[ \int_2^T |\zeta(\sigma + it)|^2 dt \sim \zeta(2 \sigma) T \]
for $\sigma > 1/2$ and
\[ \int_2^T |\zeta(1/2 + it)|^2 dt \sim T \log T \]
hold (see, for example, Theorem 7.2 and Theorem 7.3 in \cite{titchmarsh}).
The line $\sigma = 1/2$ is the critical line
for $\zeta(\sigma + it)$ and the series
\[ \zeta(2 \sigma) = \sum_{n=1}^{\infty} \frac{1}{n^{2 \sigma}} \]
diverges on $\sigma = 1/2$. On the other hand,
$\zeta_2^{\Box}(\sigma_1, \sigma_2)$
converges if and only if $\sigma_2 > 1/2$ and $\sigma_1 + \sigma_2 > 1$.
Moreover, if $\sigma_1 = \sigma_2 > 1/2$ then
$I^{\Box}(T) \sim \zeta_2^{\Box}(\sigma_1, \sigma_2) T$ holds
by
\[ \int_2^T |\zeta(\sigma + it)|^4 dt = O(T) \]
for $\sigma > 1/2$ (see Theorem 7.5 in \cite{titchmarsh})
and Carlson's mean value theorem (see p. 304 in \cite{tit_func}).
Hence we can expect that
$I^{\Box}(T) \sim \zeta_2^{\Box}(\sigma_1, \sigma_2) T$ holds
for $\sigma_2 > 1/2$ and $\sigma_1 + \sigma_2 > 1$ and
the boundary of the region $\sigma_2 > 1/2$
and $\sigma_1 + \sigma_2 > 1$ is an analogue of the critical line
for $\zeta_2(\sigma_1 + it, \sigma_2 + it)$.

\section{Lemmas for the proof of Theorems}
In this section, we collect some auxiliary results and definitions.

First, we give the definition of $\zeta_2^{[1]}(\sigma_1, s_2)$,
$\zeta_2^{[2]}(s_1, \sigma_2)$ and $\zeta_2^{\Box}(\sigma_1, \sigma_2)$.

We define
\[ \zeta_2^{[1]}(\sigma_1, s_2) = \sum_{m=1}^{\infty}
     \frac{1}{m^{\sigma_1}}
     \biggl| \zeta(s_2) - \sum_{n=1}^{m} \frac{1}{n^{s_2}} \biggr|^2 \]
for $s_2 \neq 1$. Since we have
\begin{equation} \label{eval_zeta21}
\zeta_2^{[1]}(2 \sigma_1, s_2) \ll \sum_{m=1}^{\infty}
\begin{cases}
  m^{2 - 2 \sigma_1 - 2 \sigma_2} & (\sigma_2 > 1) \\
  m^{-2 \sigma_1} (\log m)^2 & (\sigma_2 = 1) \\
  m^{2 - 2 \sigma_1 - 2 \sigma_2} & (\sigma_2 < 1),
\end{cases}
\end{equation}
the series $\zeta_2^{[1]}(2 \sigma_1, s_2)$
converges in the region $\sigma_1 + \sigma_2 > 3/2$.

We define
\[ \zeta_2^{[2]}(s_1, \sigma_2) = \sum_{n = 2}^{\infty}
     \biggl| \sum_{m = 1}^{n - 1} \frac{1}{m^{s_1}} \biggr|^2
     \frac{1}{n^{\sigma_2}} \]
(this definition is the same as \cite{matsu_tsumu}).
Since we have
\begin{equation} \label{eval_zeta22}
\zeta_2^{[2]}(s_1, 2 \sigma_2) \ll \sum_{n=2}^{\infty}
\begin{cases}
  n^{-2 \sigma_2} & (\sigma_1 > 1) \\
  n^{-2 \sigma_2} (\log n)^2 & (\sigma_1 = 1) \\
  n^{2 - 2 \sigma_1 - 2 \sigma_2} & (\sigma_1 < 1),
\end{cases}
\end{equation}
the series $\zeta_2^{[2]}(s_1, 2 \sigma_2)$
converges in the region $\sigma_2 > 1/2$ and
$\sigma_1 + \sigma_2 > 3/2$.

We define
\[ \zeta_2^{\Box} (\sigma_1, \sigma_2) = \sum_{k=2}^{\infty}
     \Biggl( \sum_{mn=k \atop m < n} \frac{1}{m^{\sigma_1} n^{\sigma_2}}
     \Biggr)^2. \]
We note that $\#\{(m,n)| mn=k, m<n\}\ll k^{\epsilon}$ for any
$\epsilon > 0$. Since
\begin{equation} \label{eval_zeta2b}
\begin{split}
  &\zeta_2(2 \sigma_1, 2 \sigma_2) <
  \zeta_2^{\Box}(\sigma_1, \sigma_2) \\
  &= \sum_{k=2}^{\infty} k^{-2 \sigma_2} \Biggl(
    \sum_{\substack{m \mid k \\ m < \sqrt{k}}}
    \frac{1}{m^{\sigma_1 - \sigma_2}} \Biggr)^2 \\
  &\ll \sum_{k=2}^{\infty}
    \begin{cases}
      k^{-2 \sigma_2 + \epsilon} & (\sigma_1 \ge \sigma_2) \\
      k^{-\sigma_1 - \sigma_2 + \epsilon} & (\sigma_1 < \sigma_2)
    \end{cases}
\end{split}
\end{equation}
for any $\epsilon > 0$, the series $\zeta_2^{\Box}(\sigma_1, \sigma_2)$
converges if and only if $\sigma_2 > 1/2$ and $\sigma_1 + \sigma_2 > 1$.

\begin{lemma}[Theorem 5.2 in \cite{ivic}]  \label{lem_mth_dpoly}
Let $a_{1}, \cdots ,a_{N}$ be arbitrary complex numbers. Then
\begin{equation}  \label{eq_rama}
\int_{0}^{T} \biggl| \sum_{n\leq N}a_{n}n^{it} \biggr|^{2}dt=
T\sum_{n\leq N}|a_{n}|^{2}+O\left(\sum_{n\leq N}n|a_{n}|^{2}\right),
\end{equation}
and the above formula remains also valid if $N=\infty$,
provided that the series on the right-hand side of (\ref{eq_rama}) converges.
\end{lemma}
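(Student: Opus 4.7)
The plan is to expand the square and reduce the problem to a Hilbert-type inequality. Writing
\[
\int_{0}^{T} \biggl| \sum_{n\le N} a_n n^{it} \biggr|^2 dt = \sum_{m,n \le N} a_m \overline{a_n} \int_0^T (m/n)^{it}\,dt,
\]
the diagonal terms ($m=n$) contribute exactly $T\sum_{n\le N}|a_n|^2$. For $m\neq n$ the inner integral equals $\frac{(m/n)^{iT}-1}{i\log(m/n)}$ and is bounded in modulus by $2/|\log(m/n)|$, so the off-diagonal contribution is at most a constant multiple of
\[
S := \sum_{\substack{m,n\le N\\ m\neq n}} \frac{|a_m|\,|a_n|}{|\log(m/n)|}.
\]

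The heart of the proof is to establish $S = O\bigl(\sum_{n\le N} n|a_n|^2\bigr)$. I would invoke the Montgomery--Vaughan Hilbert-type inequality: for real numbers $\lambda_n$ with minimal gaps $\delta_n = \min_{k\neq n}|\lambda_n-\lambda_k|$, one has
\[
\biggl|\sum_{\substack{m,n\\ m\neq n}} \frac{x_m\overline{x_n}}{\lambda_m-\lambda_n}\biggr| \ll \sum_n \frac{|x_n|^2}{\delta_n}.
\]
Applied with $\lambda_n=\log n$ (so that $\delta_n \asymp 1/n$) and $x_n = a_n$, this yields $S \ll \sum_n n|a_n|^2$, the symmetrisation needed to pass from $|\log(m/n)|$ to the signed quantity $\lambda_m-\lambda_n$ being harmless.

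The extension to $N=\infty$ follows by an $L^2$-approximation argument: granted that $\sum_n n|a_n|^2<\infty$, applying the finite version to tails shows that the partial sums form a Cauchy sequence in $L^2[0,T]$, and the identity passes to the limit. The main obstacle is the Hilbert inequality step itself; a softer route that replaces $|\log(m/n)|^{-1}$ by $\max(m,n)/|m-n|$ and then symmetrises via $|a_m|\,|a_n|\le \tfrac12(|a_m|^2+|a_n|^2)$ picks up an extraneous $\log N$ factor, so to obtain the clean error term $O\bigl(\sum n|a_n|^2\bigr)$ one really needs the full Montgomery--Vaughan estimate (or an equivalent duality/Rayleigh-quotient argument).
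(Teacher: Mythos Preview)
Your argument is correct and is precisely the standard proof of this mean-value theorem via the Montgomery--Vaughan Hilbert inequality. Note, however, that the paper does not supply its own proof of this lemma: it is quoted verbatim as Theorem~5.2 of Ivi\'c's book and used as a black box throughout. So there is no ``paper's proof'' to compare against here; what you have written is essentially the argument behind the cited reference, and it is sound. Your closing remark is also accurate: the elementary symmetrisation $|a_m|\,|a_n|\le\tfrac12(|a_m|^2+|a_n|^2)$ together with the crude bound $|\log(m/n)|^{-1}\ll \max(m,n)/|m-n|$ loses a logarithm, and the Montgomery--Vaughan inequality (or an equivalent spectral argument) is indeed needed to get the error $O\bigl(\sum_n n|a_n|^2\bigr)$ without that loss.
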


The following lemmas are well-known results for $\zeta(s)$
(see \cite{edwards} in p. 114 and Theorem 4.11 in \cite{titchmarsh}).

\begin{lemma}  \label{lem_em}
Let $s = \sigma + it \in \mathbb{C}$, $m,N \in \mathbb{N}$ and
$M=2m+1$. For $\sigma > -2m$ we have
\begin{align*}
  \zeta(s) = \sum_{n \le N} \frac{1}{n^s} &+ \frac{N^{1-s}}{s-1}
    - \frac{N^{-s}}{2} + \sum_{k=1}^{2m} \frac{B_{k+1}}{(k+1)!}
    (s)_k N^{-(s+k)} + \\
    &+ R_{M,N}(s),
\end{align*}
where
\[ R_{M,N}(s) = - \frac{(s)_M}{M!} \int_N^{\infty}
    B_M(x - [x]) x^{-s-M} dx. \]
\end{lemma}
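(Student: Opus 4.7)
The plan is to derive the expansion from the classical Euler--Maclaurin summation formula applied to $f(x) = x^{-s}$, first in the region $\sigma > 1$ where the defining Dirichlet series for $\zeta(s)$ converges absolutely, and then to extend by analytic continuation to $\sigma > -2m$.

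First I would establish the Euler--Maclaurin identity
\[ \sum_{n=a}^{b} f(n) = \int_a^b f(x)\,dx + \frac{f(a)+f(b)}{2} + \sum_{k=1}^{M-1} \frac{B_{k+1}}{(k+1)!}\bigl(f^{(k)}(b) - f^{(k)}(a)\bigr) + \frac{(-1)^{M+1}}{M!} \int_a^b B_M(\{x\}) f^{(M)}(x)\,dx, \]
which follows from iterated integration by parts starting with $B_1(x) = x - 1/2$ and using the relations $B_{k+1}'(x) = (k+1)B_k(x)$ and $B_{k+1}(0) = B_{k+1}(1)$ for $k \ge 1$. I would then specialize to $f(x) = x^{-s}$ with $a = N$, using $f^{(k)}(x) = (-1)^k (s)_k x^{-s-k}$, and let $b \to \infty$. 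For $\sigma > 1$ all boundary contributions at $b$ vanish, $\int_N^\infty x^{-s}\,dx$ yields $N^{1-s}/(s-1)$, and the remaining finite sum and remainder integral reduce to the stated form after tracking the signs $(-1)^k$ from $f^{(k)}$ together with the sign in front of the remainder; the vanishing of $B_{k+1}$ at even $k \ge 2$ means the displayed sum $\sum_{k=1}^{2m}$ in the lemma is the same as a sum over odd $k$ only. Adding $\sum_{n \le N} n^{-s}$ to both sides then gives the claimed identity for $\zeta(s)$ in the half-plane $\sigma > 1$.

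The final step is analytic continuation. The right-hand side is meromorphic on $\sigma > -2m$: the terms $N^{1-s}/(s-1)$, $-N^{-s}/2$, and the finite sum are meromorphic (with a simple pole only at $s=1$, matching that of $\zeta$), while the remainder integral $R_{M,N}(s)$ is absolutely convergent there, since $B_M(\{x\})$ is bounded and $\int_N^\infty x^{-\sigma - M}\,dx$ converges exactly when $\sigma + M > 1$, i.e., $\sigma > -2m$ by the choice $M = 2m+1$. By the identity theorem for holomorphic functions, the formula valid in $\sigma > 1$ extends to all of $\sigma > -2m$, proving the lemma.

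The only real difficulty is bookkeeping: one must track the signs $(-1)^k$ from the $k$th derivative of $x^{-s}$, the alternating signs from iterated integration by parts, and verify that boundary contributions at $b = \infty$ vanish for every $k \le M-1$ when $\sigma > 1$ (which only requires $\sigma + k > 0$, automatic for $k \ge 1$). No substantive analytic obstacle is expected, as the result is entirely classical and appears in both references cited in the statement.
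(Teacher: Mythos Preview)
The paper does not actually prove this lemma: it is stated there as a well-known result with a citation to Edwards (p.~114) and Titchmarsh, and no argument is supplied. Your proposal is the standard Euler--Maclaurin derivation that one finds in those references, and it is correct in outline; the sign bookkeeping you flag does resolve as you describe, and the analytic-continuation step via absolute convergence of the remainder integral for $\sigma > -2m$ is exactly what is needed. So there is nothing to compare---you have simply supplied the classical proof that the paper defers to the literature.
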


\begin{corollary} \label{cor_em}
Let $s= 1+ it$. For fixed $t > 0$ we have
\[ \zeta(s) - \sum_{n \le N} \frac{1}{n^{s}} =
   \frac{N^{1-s}}{s-1} + O(N^{-1}) = O(1).\]
\end{corollary}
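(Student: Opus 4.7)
The plan is to apply Lemma \ref{lem_em} directly at $s = 1 + it$. Since $\sigma = 1 > -2$, we may take $m = 1$ (so $M = 3$), which gives
\[
  \zeta(s) - \sum_{n \le N} \frac{1}{n^s}
  = \frac{N^{1-s}}{s-1} - \frac{N^{-s}}{2}
    + \sum_{k=1}^{2} \frac{B_{k+1}}{(k+1)!} (s)_k N^{-(s+k)}
    + R_{3,N}(s).
\]
On the line $\sigma = 1$ we have $|N^{-s}| = N^{-1}$ and $|(s)_k N^{-(s+k)}| \ll_t N^{-1-k}$ for $k = 1, 2$, so the finite sum contributes $O_t(N^{-2})$.

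For the tail, since $B_3(x-[x])$ is bounded, the trivial estimate
\[
  |R_{3,N}(s)| \le \frac{|(s)_3|}{3!} \int_N^{\infty} |B_3(x - [x])| x^{-\sigma - 3} dx
  \ll_t \int_N^{\infty} x^{-4} dx \ll N^{-3}
\]
holds. Collecting these contributions yields
\[
  \zeta(s) - \sum_{n \le N} \frac{1}{n^s}
  = \frac{N^{1-s}}{s-1} + O(N^{-1}),
\]
with implied constant depending on $t$, which is the first equality.

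For the second equality, I would simply observe that $|N^{1-s}/(s-1)| = |N^{-it}|/|it| = 1/t = O(1)$ since $t > 0$ is fixed, and likewise $O(N^{-1}) = O(1)$. There is no real obstacle here; this is a direct specialization of the Euler-Maclaurin formula, the only mild point being that all implied constants depend on $t$, which is harmless since $t$ is fixed.
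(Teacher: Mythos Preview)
Your argument is correct and is exactly the intended one: the paper states this as a corollary of Lemma~\ref{lem_em} without proof, and your direct specialization (taking $m=1$, bounding the finite Bernoulli terms and $R_{3,N}$ trivially on $\sigma=1$) is precisely how one reads it off.
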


\begin{lemma}  \label{lem_hl}
Let $s = \sigma + it \in \mathbb{C}$.
We have
\[ \zeta(s) = \sum_{1 \le n \le x} \frac{1}{n^s} -
     \frac{x^{1-s}}{1-s} + O(x^{-\sigma}) \]
uniformly for $\sigma \ge \sigma_0 > 0$, $x \ge 1$,
$|t| \le 2 \pi x / C$, where $C$ is a given constant greater
than $1$.
\end{lemma}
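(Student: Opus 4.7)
\textbf{Proof plan for Lemma~\ref{lem_hl}.}

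The plan is to derive the approximation from Lemma~\ref{lem_em} applied with $N=[x]$ and $m=0$:
\[
\zeta(s)=\sum_{n\le[x]}\frac{1}{n^{s}}+\frac{[x]^{1-s}}{s-1}-\frac{[x]^{-s}}{2}-s\int_{[x]}^{\infty}\frac{\{u\}-1/2}{u^{s+1}}\,du,
\]
valid for $\sigma>0$. Since the summation is over integers, $\sum_{n\le[x]}n^{-s}=\sum_{n\le x}n^{-s}$, so the sum already matches the target. The mean value theorem applied to $u\mapsto u^{1-s}$ on $[[x],x]$ gives $|[x]^{1-s}-x^{1-s}|\le|1-s|x^{-\sigma}$, so $[x]^{1-s}/(s-1)=-x^{1-s}/(1-s)+O(x^{-\sigma})$, and the term $-[x]^{-s}/2$ is trivially $O(x^{-\sigma})$.

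Everything therefore reduces to showing
\[
R(s,x):=-s\int_{[x]}^{\infty}\frac{\{u\}-1/2}{u^{s+1}}\,du=O(x^{-\sigma})
\]
uniformly under the hypothesis $|t|\le 2\pi x/C$ with $C>1$. The absolute bound yields only $O(|s|x^{-\sigma})$, which fails when $|t|\asymp x$, so cancellation in the oscillating integrand must be exploited. I would insert the Fourier expansion $\{u\}-\tfrac{1}{2}=-\sum_{k\ge1}\sin(2\pi ku)/(\pi k)$, reducing the problem to a uniform-in-$s$ estimate for $\int_{[x]}^{\infty}\sin(2\pi ku)u^{-s-1}\,du$ for each $k\ge 1$.

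I would estimate each of these integrals by integration by parts applied in pairs (first against $\sin$, then against $\cos$). Because $[x]$ is an integer, the boundary terms vanish at every second step (as $\sin(2\pi k[x])=0$), and after $2p$ steps one obtains an explicit sum of boundary terms of the form $(s+1)(s+2)\cdots(s+2j)\,[x]^{-s-1-2j}/(2\pi k)^{2j+1}$ for $0\le j<p$, plus a residual that vanishes as $p\to\infty$. Thanks to $|t|\le 2\pi x/C$, the ratio of consecutive boundary terms is comparable to $|s|^{2}/(2\pi k[x])^{2}\le 1/(Ck)^{2}$, which is strictly less than $1$ because $C>1$ and $k\ge1$. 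Summing the resulting geometric series gives
\[
\int_{[x]}^{\infty}\sin(2\pi ku)u^{-s-1}\,du\ll\frac{x^{-\sigma-1}}{k}
\]
uniformly in $s$. Summing over $k$ (with the $1/k$ Fourier weight) produces $O(x^{-\sigma-1})$ for the whole inner sum, and multiplying by $s$, together with $|s|\ll x$, gives $R(s,x)=O(x^{-\sigma})$ as required.

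The main obstacle is precisely this uniform oscillatory-integral estimate. The hypothesis $C>1$ is used in an essential way to force the geometric ratio of consecutive boundary terms below $1$, thereby absorbing the factor $|s|$ coming from the outer $-s$; without it, repeated integration by parts produces a divergent series at $k=1$ and only the weaker bound $O(|s|x^{-\sigma})$ survives.
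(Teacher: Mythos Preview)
The paper does not prove this lemma; it is quoted as the classical Hardy--Littlewood approximate formula (Theorem~4.11 in Titchmarsh). The proof the authors have in mind is visible in their treatment of the companion Lemma~\ref{lem_zetapr} for $\zeta'(s)$: apply Euler--Maclaurin with a large integer cutoff $N$, then invoke the exponential-sum Lemma~\ref{lem_exp_sum} to show
\[
\sum_{x<n\le N}n^{-s}=\int_x^N u^{-s}\,du+O(x^{-\sigma}),
\]
and let $N\to\infty$. Your route via the Fourier expansion of $\{u\}-\tfrac12$ is a legitimate alternative in spirit, but the key step as written does not go through.

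The gap is in the repeated integration by parts. After $2p$ integrations you obtain boundary terms
\[
c_j=\frac{(s+1)(s+2)\cdots(s+2j)}{(2\pi k)^{2j+1}}\,[x]^{-s-1-2j},\qquad 0\le j<p,
\]
and a residual integral. The ratio $|c_{j+1}/c_j|=|(s+2j+1)(s+2j+2)|/(2\pi k[x])^{2}$ is \emph{not} comparable to $|s|^{2}/(2\pi k[x])^{2}$ for all $j$; it is only so while $j$ is small compared with $|s|$. For $j\gg |s|$ the ratio behaves like $(2j)^{2}/(2\pi k[x])^{2}\to\infty$, so the boundary series diverges and the residual does \emph{not} tend to $0$ as $p\to\infty$. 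What you have produced is the asymptotic expansion of the integral, not a convergent one, and truncating it requires a separate estimate of the remainder that your argument does not supply.

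If you want to keep the Fourier-series strategy, integrate by parts once against the \emph{combined} phase $e^{i(2\pi k u - t\log u)}$ rather than against $\sin(2\pi k u)$ alone. For $u\ge[x]$ the derivative $2\pi k - t/u$ is bounded below by $2\pi(k-1/C)\gg k$ thanks to $|t|\le 2\pi x/C$, and a single integration by parts then yields
\[
\int_{[x]}^{\infty}e^{\pm 2\pi i k u}\,u^{-s-1}\,du\ \ll\ \frac{x^{-\sigma-1}}{k},
\]
uniformly in $s$. Summing over $k$ with the Fourier weight $1/(\pi k)$ and multiplying by $|s|\ll x$ gives $R(s,x)=O(x^{-\sigma})$, which is exactly what you need. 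This is essentially the first-derivative (van der Corput) estimate and is the analytic counterpart of the exponential-sum argument the paper uses.
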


The following lemma is an analogue of Lemma \ref{lem_hl}
for $\zeta^{\prime} (s)$.

\begin{lemma} \label{lem_zetapr}
Let $s = \sigma + it \in \mathbb{C}$.
We have
\[ \zeta'(s)=-\sum_{n\leq x}n^{-s}\log n-\frac{x^{-s+1}\log
x}{s-1}-\frac{x^{-s+1}}{(s-1)^{2}}+O(x^{-\sigma}\log x) \]
uniformly for $\sigma\geq\sigma_{0}>0$, $x\geq \exp(\sigma_{0}^{-1})$,
$|t|<2\pi x/C$, where $C$ is a given constant greater than $1$.
\end{lemma}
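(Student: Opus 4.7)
The natural approach is to differentiate the approximate formula of Lemma \ref{lem_hl} with respect to $s$ and then control the derivative of the error. Write Lemma \ref{lem_hl} as $\zeta(s) = f(s) + R(s)$, where
\[ f(s) = \sum_{n \le x} n^{-s} - \frac{x^{1-s}}{1-s}, \qquad R(s) = \zeta(s) - f(s) = O(x^{-\sigma}) \]
holds uniformly in the region of Lemma \ref{lem_hl}. Note that $R$ extends to an entire function: the pole of $-x^{1-s}/(1-s) = x^{1-s}/(s-1)$ at $s = 1$ cancels that of $\zeta(s)$, so $R'(s)$ is well-defined for all $s$. A direct differentiation gives
\[ f'(s) = -\sum_{n \le x} n^{-s}\log n - \frac{x^{-s+1}\log x}{s-1} - \frac{x^{-s+1}}{(s-1)^2}, \]
which reproduces exactly the three explicit terms on the right-hand side of the claimed formula. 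Hence the whole task reduces to showing that $R'(s) = O(x^{-\sigma}\log x)$.

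To bound $R'(s)$, the plan is to apply Cauchy's integral formula on a small circle $\mathcal{C} : |w - s| = r$ centred at $s$, giving
\[ R'(s) = \frac{1}{2\pi i}\oint_{\mathcal{C}} \frac{R(w)}{(w-s)^2}\,dw, \qquad |R'(s)| \le r^{-1}\max_{w \in \mathcal{C}}|R(w)|. \]
For $w \in \mathcal{C}$ one has $\mathrm{Re}(w) \ge \sigma - r$, so applying the error bound of Lemma \ref{lem_hl} on the circle gives $|R(w)| \ll x^{-\sigma + r}$. Choosing the radius $r = 1/\log x$ makes $x^{r} = e$, and therefore $|R'(s)| \ll r^{-1}\, x^{-\sigma} = x^{-\sigma}\log x$, which is the bound we want. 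Combining this with the formula for $f'(s)$ above yields the statement of the lemma.

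The only real obstacle is the bookkeeping that ensures the whole disk $\overline{D(s,r)}$ lies in a region where Lemma \ref{lem_hl} applies. The hypothesis $x \ge \exp(\sigma_0^{-1})$ gives $1/\log x \le \sigma_0$, so after invoking Lemma \ref{lem_hl} with the slightly smaller constant $\sigma_0/2$ one has $\mathrm{Re}(w) \ge \sigma - r \ge \sigma_0/2$ for $x$ in the stated range (a mild enlargement of the threshold in $x$, if necessary, is absorbed into the implied constant). The imaginary-part condition is harmless: $|\mathrm{Im}(w)| \le |t| + r < 2\pi x/C + 1/\log x < 2\pi x/C'$ for any $C' \in (1,C)$ and $x$ large, so Lemma \ref{lem_hl} continues to apply on $\mathcal{C}$ with a possibly shrunken $C$. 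Once these verifications are in place, the Cauchy estimate completes the proof.
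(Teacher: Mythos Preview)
Your argument is correct and takes a genuinely different route from the paper. The paper does not differentiate Lemma~\ref{lem_hl}; instead it starts from scratch with the Euler--Maclaurin formula applied to $\sum n^{-s}\log n$, obtaining
\[
\zeta'(s)=-\sum_{n\le N}n^{-s}\log n-\frac{N^{1-s}\log N}{s-1}-\frac{N^{1-s}}{(s-1)^2}+O(|s|N^{-\sigma}\log N),
\]
and then invokes the van der Corput--type Lemma~\ref{lem_exp_sum} (with $g(u)=u^{-\sigma}\log u$) to replace the cutoff $N$ by $x$ and let $N\to\infty$. In other words, the paper essentially rederives Hardy--Littlewood for $\zeta'$, whereas you deduce it from the result for $\zeta$ via a Cauchy estimate on a disk of radius $r=1/\log x$. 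Your approach is shorter and avoids the exponential-sum lemma entirely; the paper's approach is more self-contained and makes the constants and the uniformity explicit without any shrinking of $\sigma_0$ or $C$. One small point: your inequality $\sigma-r\ge\sigma_0/2$ does not quite follow from $r\le\sigma_0$; taking instead $r=1/(2\log x)$ gives $r\le\sigma_0/2$ directly under the hypothesis $x\ge\exp(\sigma_0^{-1})$ and removes the need for the parenthetical caveat.
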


In order to prove this lemma we use the following well-known lemma
(see Lemma 4.10 in \cite{titchmarsh}).

\begin{lemma} \label{lem_exp_sum}
Let $f(x)$ be a real function with a continuous and steadily decreasing
derivative $f'(x)$ in $(a,b)$, and let $f'(b)=\alpha$, $f'(a)=\beta$ and
$g(x)$ be a real decreasing function, with a continuous derivative
$g'(x)$, and let $|g'(x)|$ be steadily decreasing.  Then
\begin{align*}
  \sum_{a<n\leq b}g(n)e^{2\pi if(n)}=
  &\sum_{\alpha-\eta<\nu<\beta+\eta} \int_{a}^{b}g(x)e^{2\pi
  i(f(x)-\nu x)}dx+ \\
    &+ O(g(a)\log(\beta-\alpha+2))+O(|g'(a)|),
\end{align*}
where $\eta$ is any positive constant less than $1$.
\end{lemma}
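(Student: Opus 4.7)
The plan is to derive Lemma \ref{lem_exp_sum} from the (truncated) Poisson summation formula, combined with integration-by-parts estimates for the non-resonant Fourier frequencies.

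First, I would apply a suitable form of the Poisson summation formula (e.g.\ Theorem 4.9 of Titchmarsh) to the function $\phi(x) = g(x) e^{2\pi i f(x)}$ on $(a,b]$, treating the endpoints with some care. This rewrites
\[ \sum_{a<n\leq b} g(n) e^{2\pi i f(n)} = \sum_{\nu=-\infty}^{\infty} \int_a^b g(x) e^{2\pi i (f(x) - \nu x)}\,dx + \text{boundary terms}, \]
with the boundary terms absorbable into the claimed $O(g(a))$ error. The next step is to split the $\nu$-sum into the resonant band $\alpha - \eta < \nu < \beta + \eta$, which is precisely the main term in the statement, and the two complementary ranges $\nu \geq \beta + \eta$ and $\nu \leq \alpha - \eta$. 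Setting $F(x) = f(x) - \nu x$, the hypothesis that $f'$ decreases monotonically from $\beta$ to $\alpha$ on $[a,b]$ implies that $F' = f' - \nu$ has constant sign and satisfies $|F'(x)| \geq \max(\eta, |\nu - \gamma|)$ throughout $[a,b]$, where $\gamma$ is $\beta$ or $\alpha$ according to side.

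The decisive estimate is then integration by parts:
\[ \int_a^b g(x) e^{2\pi i F(x)}\,dx = \left[ \frac{g(x)}{2\pi i F'(x)} e^{2\pi i F(x)} \right]_a^b - \int_a^b e^{2\pi i F(x)} \frac{d}{dx}\!\left( \frac{g(x)}{2\pi i F'(x)} \right) dx. \]
Exploiting the monotonicity of $g$, $|g'|$, and $F'$, the second mean-value theorem for integrals yields
\[ \left| \int_a^b g(x) e^{2\pi i F(x)}\,dx \right| \ll \frac{g(a)}{|\nu - \gamma|} + \frac{|g'(a)|}{|\nu - \gamma|^2}. \]
Summing the first term over $\nu$ with $|\nu - \gamma| \geq \eta$ produces $O(g(a) \log(\beta - \alpha + 2))$ (the range $\eta \leq |\nu - \gamma| \leq \beta - \alpha + 1$ contributes the logarithm; the convergent tail contributes $O(g(a))$), while the second term sums to $O(|g'(a)|)$, giving the stated error.

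The main obstacle will be the integration-by-parts step, specifically verifying that $g(x)/F'(x)$ really has total variation bounded by $g(a)/|\nu-\gamma| + |g'(a)|/|\nu-\gamma|^2$ uniformly in $\nu$. Since the sign of $(g/F')'$ need not be constant, one must decompose $(a,b)$ into sub-intervals on which each of $g$, $g'$, and $F'$ is separately monotone, and apply the second mean-value theorem on each piece, then reassemble the bounds. A secondary technical point is justifying the Poisson-style expansion with $\phi$ discontinuous at the endpoints: one either smooths the characteristic function of $(a,b]$ and controls the smoothing error, or invokes a sharp version of Poisson summation that already absorbs the discontinuity-of-$\phi$ artefacts into the $O(g(a))$ term.
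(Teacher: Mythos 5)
First, a point of comparison: the paper does not prove this lemma at all --- it is quoted verbatim as Lemma 4.10 of Titchmarsh's book and used as a black box --- so your attempt must be measured against the classical argument rather than anything internal to the paper.

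Your sketch has a genuine gap at exactly the step you call decisive. After one integration by parts, the boundary terms contribute
\[
\frac{g(b)}{2\pi |F'(b)|}+\frac{g(a)}{2\pi |F'(a)|}\asymp \frac{g(a)}{|\nu-\gamma|},
\]
and this $1/|\nu-\gamma|$ decay is sharp: your $\phi$ is cut off at $a$ and $b$, and Fourier coefficients of a function with jump discontinuities decay no faster than $1/\nu$. Hence $\sum_{|\nu-\gamma|\ge \eta} g(a)/|\nu-\gamma|$ diverges, and your parenthetical claim that ``the convergent tail contributes $O(g(a))$'' is false --- the first term has no convergent tail. The second term $|g'(a)|/|\nu-\gamma|^{2}$ cannot rescue this, and in fact is not obtainable under the stated hypotheses: $1/|\nu-\gamma|^{2}$ decay would require a second integration by parts and hence pointwise control of $f''$, whereas the lemma assumes only that $f'$ is continuous and monotone. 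Your proposed repair (splitting into subintervals of monotonicity and applying the second mean value theorem) again yields only $O\bigl(g(a)/|\nu-\gamma|\bigr)$ per piece, since $\frac{d}{dx}(g/F')$ contains $gF''/F'^{2}$, whose total mass over $[a,b]$ is $\asymp g(a)\,\bigl|1/F'(b)-1/F'(a)\bigr|$, of order $1/|\nu-\gamma|$, not $1/|\nu-\gamma|^{2}$. Worse still, the two one-sided tails $\nu\ge\beta+\eta$ and $\nu\le\alpha-\eta$ that you estimate independently need not even converge conditionally one side at a time: if $b$ is an integer, the boundary contribution at $b$ carries no oscillating factor $e^{-2\pi i\nu b}$, and the $\nu$-th term of the tail is $-\phi(b)/(2\pi i\nu)+o(1/\nu)$, a divergent series. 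The Poisson series converges only symmetrically, by cancellation between $\nu\to+\infty$ and $\nu\to-\infty$, and your decomposition into a resonant band plus two independent one-sided sums destroys precisely that cancellation.

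The classical proof (this is how Titchmarsh arrives at his Lemmas 4.8 and 4.10) sidesteps all of this by never invoking raw Poisson summation. One writes $\sum_{a<n\le b}\phi(n)=\int_a^b\phi\,dx-\int_a^b\phi'(x)\psi(x)\,dx+O(\cdots)$ with the sawtooth $\psi(x)=x-[x]-1/2$, and inserts its boundedly truncated Fourier expansion $\psi(x)=-\sum_{\nu\neq 0}e^{2\pi i\nu x}/(2\pi i\nu)$. The sawtooth coefficients supply the crucial extra factor $1/\nu$, so after the first-derivative test the non-resonant sum is majorized by $\sum_{\nu}\frac{1}{|\nu|}\cdot\frac{g(a)}{|\nu-\gamma|}$, which is absolutely convergent and of size $O(g(a)\log(\beta-\alpha+2))$; undoing the $1/\nu$ weight in the resonant band by a single integration by parts reproduces the stated main term, and the leftover pieces (involving $\int_a^b|g'|$ and endpoint evaluations) produce the $O(g(a))$ and $O(|g'(a)|)$ errors. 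If you insist on the Poisson framework, you would have to keep the full two-sided series and resum the boundary contributions into sawtooth-type conditionally convergent series before estimating --- which amounts to the same argument in disguise.
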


\begin{proof}[Proof of Lemma \ref{lem_zetapr}]
We have, by the Euler-Maclaurin formula,
\begin{align*}
  \zeta'(s)=- &\sum_{n\leq N}n^{-s}\log n-\int_{N}^{\infty}x^{-s}
     \log xdx-\frac{1}{2}N^{-s}\log N - \\
     &-\int_{N}^{\infty}\frac{d}{dx}
     \left(x^{-s}\log x\right)(x-[x]-1/2)dx.
\end{align*}
Since
\begin{align*}
  \int_{N}^{\infty}x^{-s}\log xdx&=\frac{1}{-s+1}\left([x^{-s+1}
  \log x]_{N}^{\infty}-\int_{N}^{\infty}x^{-s}dx\right)\\
  &=\frac{N^{-s+1}\log N}{s-1}+\frac{N^{-s+1}}{(s-1)^{2}},
\end{align*}
and
\begin{align*}
  &\int_{N}^{\infty}\frac{d}{dx}\left(x^{-s}\log x\right)(x-[x]-1/2)dx \\
  &=\int_{N}^{\infty}(-sx^{-s-1}\log x+x^{-s-1})(x-[x]-1/2)dx\\
  &=O(|s|N^{-\sigma}\log N),
\end{align*}
we have
\[ \zeta'(s)=-\sum_{n\leq N}n^{-s}\log n-\frac{N^{-s+1} \log N}{s-1}
     -\frac{N^{-s+1}}{(s-1)^{2}}+O(|s|N^{-\sigma}\log N). \]
The sum
\[ \sum_{x<n\leq N}n^{-\sigma}\log n \exp(it\log n) \]
is of the form considered in Lemma \ref{lem_exp_sum}, with
$g(u)=u^{-\sigma}\log u$, and
\[ f(u)=\frac{t\log u}{2\pi}. \]
Thus
\[ |f'(u)|\leq \frac{t}{2\pi x}<\frac{1}{C}. \]
Hence taking $\eta<1-C^{-1}$, we have
\[ \sum_{x<n\leq N}n^{-s}\log n=\int_{x}^{N}u^{-s}\log u du
     +O(x^{-\sigma}\log x). \]
Taking $N\rightarrow\infty$, the result follows.
\end{proof}

We use the following evaluations in this paper.

\begin{remark}   \label{rem_sumint}
Let $x_1, x_2$ be positive real variables with $1 \le x_1 \le x_2$.
For any fixed $\alpha, \beta > 0$,
\[ \sum_{x_1 \le r \le x_2} \frac{1}{r(r + \beta)^{\alpha}} \ll
   \frac{\log x_2}{(x_1 + \beta)^{\alpha}} \]
holds.
\end{remark}

\begin{remark}  \label{rem_sum_tlogt}
Let $T \ge 1$ and $M \ge 1$ with $M \ll \log T$.
For fixed $\alpha, \beta \ge 0$ we have
\begin{align*}
  \sum_{k \le M} \biggl( \frac{T}{2^k} \biggr)^{\alpha}
    \biggl( \log \biggl( \frac{T}{2^k} \biggr) \biggr)^{\beta}
  &\ll T^{\alpha} \sum_{k \le M} \biggl( \frac{1}{2^{\alpha}} \biggr)^k
    \bigl( (\log T)^{\beta} + k^{\beta} \bigr) \\
  &\ll \begin{cases}
         T^{\alpha}(\log T)^{\beta} & (\alpha \neq 0) \\
         (\log T)^{\beta + 1} & (\alpha = 0).
       \end{cases}
\end{align*}
\end{remark}

\section{Proof of Theorem \ref{th_intt1}}
In this section, we regard $s_2$ as a constant. We divide the
proof into two cases.

\begin{proof}[Proof of Theorem \ref{th_intt1} for
$\sigma_1 + \sigma_2 >2$]
We set
\[ a_m = \frac{1}{m^{\sigma_1}} \biggl( \zeta(s_2) -
     \sum_{n=1}^m \frac{1}{n^{s_2}} \biggr) \]
for $m \in \mathbb{N}$. If we assume $\sigma_2 > 1$ then we have
\begin{align*}
  \zeta_2(s_1, s_2) &= \sum_{m=1}^{\infty}
    \frac{1}{m^{\sigma_1 + it_1}}
    \sum_{n=m+1}^{\infty} \frac{1}{n^{s_2}} \\
  &= \sum_{m=1}^{\infty} a_m m^{-it_1}.
\end{align*}
The last series converges absolutely in $\sigma_1 + \sigma_2 > 2$.
Since
\[ \sum_{m=1}^{\infty} m |a_m|^2 = \sum_{m=1}^{\infty}
     \frac{1}{m^{2 \sigma_1 - 1}} \biggl| \zeta(s_2) -
     \sum_{n=1}^m \frac{1}{n^{s_2}} \biggr|^2 \]
converges by (\ref{eval_zeta21}), we have
\[ I^{[1]}(T) = \zeta_2^{[1]}(2 \sigma_1, s_2) T + O(1) \]
by Lemma \ref{lem_mth_dpoly}.
\end{proof}

In the case $3/2 \le \sigma_1 + \sigma_2 \le 2$, we use the
following lemma.

\begin{lemma}  \label{lem_approx_t1}
Let $s_1 = \sigma_1 + it_1, s_2 = \sigma_2 + it_2 \in \mathbb{C}$
with $t_1 \ge 1$ and $N \in \mathbb{N}$.
Let $C > 1$ be a given constant.
Assume that the point $(s_1, s_2) \in \mathbb{C}^2$ does not
encounter the singularities of $\zeta_2(s_1, s_2)$.
If $1 < |t_1 + t_2| < 2 \pi N/C$, then we have
\[ \zeta_2(s_1, s_2) = \sum_{m \le N} \frac{1}{m^{s_1}}
     \biggl( \zeta(s_2) - \sum_{n=1}^m \frac{1}{n^{s_2}} \biggr)
     + O(t_1^{-1} N^{2 - \sigma_1 - \sigma_2}) \]
for $\sigma_1 + \sigma_2 > 1$ and any fixed $s_2$.
\end{lemma}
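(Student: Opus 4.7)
My plan is to proceed from the absolute-convergence region $\sigma_2 > 1$ and $\sigma_1 + \sigma_2 > 2$, where writing $\sum_{n \ge 1}(m+n)^{-s_2}$ as $\sum_{k > m}k^{-s_2}$ gives
\[
  \zeta_2(s_1,s_2) = \sum_{m=1}^{\infty}\frac{1}{m^{s_1}}\left(\zeta(s_2) - \sum_{n=1}^{m}\frac{1}{n^{s_2}}\right).
\]
Splitting this at $m = N$ produces exactly the claimed main term, and the task reduces to bounding the tail $\sum_{m > N}$ by $O(t_1^{-1}N^{2-\sigma_1-\sigma_2})$. I achieve this by two applications of Lemma~\ref{lem_hl}, then extend the resulting identity to $\sigma_1+\sigma_2 > 1$ by analytic continuation.

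The first application uses $s = s_2$, $x = m$: because $s_2$ is fixed, $|t_2|$ is bounded, so $|t_2| \le 2\pi m/C$ holds for all $m$ past an $s_2$-dependent constant, certainly for $m > N$ given $t_1 \ge 1$ and the hypothesis $|t_1+t_2| < 2\pi N/C$. This yields
\[
  \zeta(s_2) - \sum_{n \le m}\frac{1}{n^{s_2}} = \frac{m^{1-s_2}}{s_2-1} + O(m^{-\sigma_2}),
\]
whose $O$-part, weighted by $m^{-s_1}$ and summed over $m > N$, contributes $O(N^{1-\sigma_1-\sigma_2})$ thanks to $\sigma_1+\sigma_2 > 1$. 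The principal remaining piece is $(s_2-1)^{-1}\sum_{m > N}m^{-(s_1+s_2-1)}$, to which I apply Lemma~\ref{lem_hl} a second time with $s = s_1+s_2-1$ and $x = N$: the hypothesis $|t_1+t_2| < 2\pi N/C$ is precisely what this lemma requires, and gives
\[
  \sum_{m > N}\frac{1}{m^{s_1+s_2-1}} = -\frac{N^{2-s_1-s_2}}{2-s_1-s_2} + O(N^{1-\sigma_1-\sigma_2}).
\]

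Collecting these, the tail equals $-N^{2-s_1-s_2}/[(s_2-1)(2-s_1-s_2)] + O(N^{1-\sigma_1-\sigma_2})$. Since $s_2$ is fixed with $s_2 \neq 1$, $|s_2-1|$ is a positive constant; the bound $|2-s_1-s_2| \ge |t_1+t_2|$ combined with $|t_1+t_2| > 1$, $t_1 \ge 1$, and $|t_2|$ bounded gives $|2-s_1-s_2| \gg t_1$ (with the implied constant depending on $s_2$), and similarly $N \gg t_1$. Both pieces therefore collapse into the desired $O(t_1^{-1}N^{2-\sigma_1-\sigma_2})$. The main obstacle is that the outer-sum representation used at the outset converges only in $\sigma_1+\sigma_2 > 2$, whereas the lemma is stated for $\sigma_1+\sigma_2 > 1$; this is resolved by analytic continuation, legitimate because both sides of the resulting identity are meromorphic in $(s_1, s_2)$, the remainder from the second application of Lemma~\ref{lem_hl} is uniformly valid for $\sigma_1+\sigma_2 - 1 \ge \sigma_0 > 0$, and the remainder from the first application, weighted by $m^{-s_1}$ and summed over $m > N$, converges absolutely throughout $\sigma_1+\sigma_2 > 1$.
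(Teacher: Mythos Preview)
Your approach is essentially the same as the paper's: split the outer sum at $m=N$, expand the inner tail $\zeta(s_2)-\sum_{n\le m}n^{-s_2}$ to extract the leading piece $\frac{m^{1-s_2}}{s_2-1}$, rewrite the resulting outer sum as $\zeta(s_1+s_2-1)-\sum_{m\le N}m^{1-s_1-s_2}$ to obtain the analytic continuation, and bound this via Lemma~\ref{lem_hl}. The paper differs only in that it uses the full Euler--Maclaurin expansion (Lemma~\ref{lem_em}) rather than Lemma~\ref{lem_hl} for the inner-sum step, producing explicit terms $I_1,\dots,I_4$ which are then bounded separately; $I_1$ is exactly your principal piece and is handled by Lemma~\ref{lem_hl} just as you do.

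The one substantive gap is that your first invocation of Lemma~\ref{lem_hl} requires $\sigma_2\ge\sigma_0>0$, so as written your argument covers only fixed $s_2$ with $\sigma_2>0$; the lemma is stated for \emph{any} fixed $s_2$. The paper's use of Lemma~\ref{lem_em} with $M=2l+1$ terms (choosing $l$ so that $\sigma_2>-2l$) is precisely what removes this restriction: the Euler--Maclaurin remainder $R_{M,m}(s_2)$ is $O(m^{-\sigma_2-M})$ for any fixed $s_2$ in that range, and the resulting tail $\sum_{m>N}m^{-s_1}R_{M,m}(s_2)$ converges absolutely for $\sigma_1+\sigma_2>1$ regardless of the sign of $\sigma_2$. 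Replacing your first use of Lemma~\ref{lem_hl} by Lemma~\ref{lem_em} makes your proof go through in full generality; otherwise the argument is sound.
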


\begin{proof}
Let $l \in \mathbb{N}$ with $\sigma_2 > -2l$.
First, we regard $s_1$ and $s_2$ as complex variables and
assume $\sigma_1, \sigma_2 > 1$. For any $N \in \mathbb{N}$,
we have
\[ \zeta_2(s_1, s_2) = \sum_{m=1}^N \frac{1}{m^{s_1}}
     \sum_{n=m+1}^{\infty} \frac{1}{n^{s_2}} +
     \sum_{m=N+1}^{\infty} \frac{1}{m^{s_1}}
     \sum_{n=m+1}^{\infty} \frac{1}{n^{s_2}} =
     V_1 + V_2, \]
say. Since
\[ V_1 = \sum_{m=1}^N \frac{1}{m^{s_1}}
     \biggl( \zeta(s_2) - \sum_{n=1}^m \frac{1}{n^{s_2}} \biggr), \]
$V_1$ is continued meromorphically to $\mathbb{C}^2$.
By setting $M=2l+1$ in Lemma \ref{lem_em}, we have
\begin{align*}
V_2 &= \sum_{m=N+1}^{\infty} \frac{1}{m^{s_1}} \biggl(
      \frac{m^{1-s_2}}{s_2 - 1} - \frac{m^{-s_2}}{2}
      + \sum_{k=1}^{M-1} \frac{B_{k+1}}{(k+1)!} (s_2)_k m^{-s_2 - k}
      + R_{M,m}(s_2) \biggr) \\
    &= \frac{1}{s_2 - 1} \sum_{m=N+1}^{\infty}
      \frac{1}{m^{s_1 + s_2 - 1}} - \frac{1}{2} \sum_{m=N+1}^{\infty}
      \frac{1}{m^{s_1 + s_2}} + \\
      &\quad + \sum_{k=1}^{M-1} \frac{B_{k+1}}{(k+1)!} (s_2)_k
      \sum_{m=N+1}^{\infty}
      \frac{1}{m^{s_1 + s_2 + k}} + \sum_{m=N+1}^{\infty}
      \frac{1}{m^{s_1}} R_{M,m}(s_2) \\
    &= \frac{1}{s_2 - 1} \biggl( \zeta(s_1 + s_2 - 1) -
      \sum_{m = 1}^N \frac{1}{m^{s_1 + s_2 - 1}} \biggr) -
      \frac{1}{2} \sum_{m=N+1}^{\infty} \frac{1}{m^{s_1 + s_2}} + \\
      &\quad + \sum_{k=1}^{M-1} \frac{B_{k+1}}{(k+1)!} (s_2)_k
      \sum_{m=N+1}^{\infty}
      \frac{1}{m^{s_1 + s_2 + k}} + \sum_{m=N+1}^{\infty}
      \frac{1}{m^{s_1}} R_{M,m}(s_2) \\
    &= I_1 + I_2 + I_3 + I_4,
\end{align*}
say. Since $I_4$ absolutely converges for
$\sigma_2 > -M + 1 = -2l$ and $\sigma_1 + \sigma_2 > -1$,
$V_2$ is continued meromorophically to
$\sigma_2 > -2l$ and $\sigma_1 + \sigma_2 > 1$.
Now, we regard $s_2$ as a constant.
By Lemma \ref{lem_hl}, we have
$I_1 \ll t_1^{-1} N^{2 - \sigma_1 - \sigma_2}$.
Also we can easily obtain
$I_2, I_3, I_4 \ll t_1^{-1} N^{2 - \sigma_1 - \sigma_2}$.
This implies the lemma.
\end{proof}

\begin{proof}[Proof of Theorem \ref{th_intt1} for
$3/2 \le \sigma_1 + \sigma_2 \le 2$]
Let
\[ a_{m}=m^{-\sigma_{1}}(\zeta(s_{2})-\sum_{n=1}^{m}n^{-s_{2}}) \]
and
\[ m_0 = \max \{ m \in \mathbb{N} \mid \frac{T}{2^m} > |t_2| + 1 \}. \]
Note that
\[ \sum_{m=1}^{\infty} |a_{m}|^2 = \zeta_2^{[1]}(2 \sigma_1, s_2) \]
in the case $\sigma_1 + \sigma_2 > 3/2$ and
\[ m_0 < \frac{\log T - \log (|t_2| + 1)}{\log 2} \le m_0 + 1 \]
hold.
We take $T \ge 2$ and $N \in \mathbb{N}$ with $|t_{2}|+1<T$
and $3T < 2 \pi N/C$, where $C > 1$, and we assume $T< t_1 < 2T$.
Then we have
\[ 1<t_{1}-|t_{2}|<|t_{1}+t_{2}|<|t_{1}|+|t_{2}|<3T<\frac{2\pi N}{C}. \]
Therefore we can use Lemma \ref{lem_approx_t1}, and we have
\[ \zeta_{2}(s_{1},s_{2})=\sum_{m=1}^{N}a_{m}m^{-it_{1}}
     + O(t_{1}^{-1}N^{2-\sigma_{1}-\sigma_{2}})
     = I_1 + I_2, \]
say. Since $a_{m}\ll m^{-\sigma_{1}-\sigma_{2}+1}$ by
Corollary \ref{cor_em}, we obtain
\[ \sum_{m=1}^{N}ma_{m}^{2} \ll \sum_{m=1}^{N}
     m^{3-2\sigma_{1}-2\sigma_{2}}\ll
     \begin{cases}
       \log N & (\sigma_{1}+\sigma_{2}=2) \\
       N^{4-2\sigma_{1}-2\sigma_{2}} & (\sigma_{1}+\sigma_{2}<2)
     \end{cases} \]
and
\[ I_{1} \ll \sum_{m=1}^{N}a_{m} \ll
     \sum_{m=1}^{N}m^{1-\sigma_{1}-\sigma_{2}}\ll
     \begin{cases}
       \log N & (\sigma_{1}+\sigma_{2}=2) \\
       N^{2-\sigma_{1}-\sigma_{2}} & (\sigma_{1}+\sigma_{2}<2).
     \end{cases} \]
Therefore we have
\[ \int_{T}^{2T}|I_{1}|^{2}dt_{1} = T\sum_{m=1}^{N}|a_{m}|^{2}+
   \begin{cases}
     O(\log N) & (\sigma_{1}+\sigma_{2}=2) \\
     O(N^{4-2\sigma_{1}-2\sigma_{2}}) & (\sigma_{1}+\sigma_{2}<2)
   \end{cases} \]
by Lemma \ref{lem_mth_dpoly} and
\[ \int_{T}^{2T}|I_{1}I_{2}|dt_{1} \ll
     N^{2-\sigma_{1}-\sigma_{2}}\max_{T<t_{1}<2T}|I_{1}| \ll
     \begin{cases}
       \log N & (\sigma_{1}+\sigma_{2}=2) \\
       N^{4-2\sigma_{1}-2\sigma_{2}} & (\sigma_{1}+\sigma_{2}<2).
     \end{cases} \]
On the other hand, we have
\[ \int_{T}^{2T}|I_{2}|^{2}dt_{1} \ll
     N^{4-2\sigma_{1}-2\sigma_{2}} \int_{T}^{2T}\frac{dt_{1}}{t_{1}^{2}}
     \ll T^{-1}N^{4-2\sigma_{1}-2\sigma_{2}}. \]
Therefore we have
\[ \int_{T}^{2T}|\zeta_{2}(s_{1},s_{2})|^{2}dt_{1} =
     T \sum_{m=1}^{N}|a_{m}|^{2}+
     \begin{cases}
       O(\log N) & (\sigma_{1}+\sigma_{2}=2) \\
       O(N^{4-2\sigma_{1}-2\sigma_{2}}) & (\sigma_{1}+\sigma_{2}<2).
     \end{cases} \]
By setting $N = [T] + 1$, we obtain
\begin{equation} \label{eval_intt1}
  \int_{T}^{2T}|\zeta_{2}(s_{1},s_{2})|^{2}dt_{1} =
    T \sum_{m \leq T}|a_{m}|^{2}+
     \begin{cases}
       O(\log T) & (\sigma_{1}+\sigma_{2}=2) \\
       O(T^{4-2\sigma_{1}-2\sigma_{2}}) & (\sigma_{1}+\sigma_{2}<2).
     \end{cases}
\end{equation}
Therefore, in the case $\sigma_1 + \sigma_2 > 3/2$, we have
\begin{equation*}
  \int_{T}^{2T}|\zeta_{2}(s_{1},s_{2})|^{2}dt_{1} =
    \zeta_2^{[1]} (2 \sigma_1, s_2) T +
      \begin{cases}
       O(\log T) & (\sigma_{1}+\sigma_{2}=2) \\
       O(T^{4-2\sigma_{1}-2\sigma_{2}}) & (3/2 < \sigma_{1}+\sigma_{2}<2).
     \end{cases}
\end{equation*}
By this relation and Remark \ref{rem_sum_tlogt}, we obtain
\begin{align*}
  &\int_{|t_2| + 1}^{T}|\zeta_{2}(s_{1},s_{2})|^{2}dt_{1} \\
  &= \int_{T/2^{m_0}}^{T}|\zeta_{2}(s_{1},s_{2})|^{2}dt_{1} + O(1) \\
  &= \sum_{1 \le k \le m_0} \int_{T/2^{k}}^{T/2^{k-1}}
    |\zeta_{2}(s_{1},s_{2})|^{2}dt_{1} + O(1) \\
  &= \zeta_2^{[1]} (2 \sigma_1, s_2) T \sum_{1 \le k \le m_0}
     \frac{1}{2^k} +
    \begin{cases}
      O \Bigl(\displaystyle{\sum_{1 \le k \le m_0}
        \log \frac{T}{2^k} \Bigr)} &
        (\sigma_{1}+\sigma_{2}=2) \\
      O \Bigl(\displaystyle{\sum_{1 \le k \le m_0}
        \Bigl( \frac{T}{2^k} \Bigr)^{4 - 2 \sigma_1 - 2 \sigma_2}
        \Bigr)} & (3/2 < \sigma_{1}+\sigma_{2}<2)
    \end{cases}  \\
  &= \zeta_2^{[1]} (2 \sigma_1, s_2) T +
       \begin{cases}
         O((\log T)^2) & (\sigma_{1}+\sigma_{2}=2) \\
         O(T^{4-2\sigma_{1}-2\sigma_{2}}) & (3/2 < \sigma_{1}+\sigma_{2}<2).
     \end{cases}
\end{align*}
This implies the theorem for $3/2 < \sigma_{1}+\sigma_{2} \le 2$.

In the case $\sigma_{1}+\sigma_{2} = 3/2$, since
\[ a_m = m^{-\sigma_{1}}
     \biggl(\zeta(s_{2})-\sum_{n=1}^{m}n^{-s_{2}}\biggr) =
   \frac{m^{1-\sigma_{1}-s_{2}}}{s_{2}-1}+O(m^{-\sigma_{1}-\sigma_{2}}) \]
by Lemma \ref{lem_em}, we have
\[ |a_{m}|^{2}=\frac{m^{-1}}{|s_{2}-1|^{2}}+O(m^{-2}). \]
Therefore we have
\[ \int_{T}^{2T}|\zeta_{2}(s_{1},s_{2})|^{2}dt_{1}
   = \frac{T\log T}{|s_{2}-1|^{2}}+O(T) \]
by (\ref{eval_intt1}).
By this relation and Remark \ref{rem_sum_tlogt}, we obtain
\begin{align*}
  &\int_{|t_2| + 1}^{T}|\zeta_{2}(s_{1},s_{2})|^{2}dt_{1} \\
  &= \sum_{1 \le k \le m_0} \int_{T/2^{k}}^{T/2^{k-1}}
    |\zeta_{2}(s_{1},s_{2})|^{2}dt_{1} + O(1) \\
  &= \frac{T}{|s_2 - 1|^2} \sum_{1 \le k \le m_0} \frac{1}{2^k}
    (\log T - k \log 2) + O(T) \\
  &= \frac{T\log T}{|s_{2}-1|^{2}} + \frac{T\log T}{|s_{2}-1|^{2}}
    \sum_{k > m_0} \frac{1}{2^k} - \frac{T \log 2}{|s_{2}-1|^{2}}
    \sum_{1 \le k \le m_0} \frac{k}{2^k} + O(T) \\
  &= \frac{T\log T}{|s_{2}-1|^{2}}+O(T).
\end{align*}
This completes the proof.
\end{proof}

\section{Proof of Theorem \ref{th_intt2}}
In this section, we regard $s_1$ as a constant. We divide the
proof into three cases.

\begin{proof}[Proof of Theorem \ref{th_intt2} for $\sigma_2 > 1$
and $\sigma_1 + \sigma_2 > 2$]
We set
\[ a_n = \frac{1}{n^{\sigma_2}} \sum_{m=1}^{n-1} \frac{1}{m^{s_1}} \]
for $n \in \mathbb{N}$. We have
\[ \zeta_2(s_1, s_2) = \sum_{n = 2}^{\infty} \biggl(
     \sum_{m=1}^{n-1} \frac{1}{m^{s_1}} \biggr)
     \frac{1}{n^{\sigma_2 + it_2}}
   = \sum_{n = 2}^{\infty} a_n n^{-it_2}. \]
Since
\[ \sum_{n=2}^{\infty} n |a_n|^2 = \sum_{n=2}^{\infty} \biggl(
     \sum_{m=1}^{n-1} \frac{1}{m^{s_1}} \biggr)
     \frac{1}{n^{2 \sigma_2 - 1}} \]
converges by (\ref{eval_zeta22}), we have
\[ I^{[2]}(T) = \zeta_2^{[2]}(s_1, 2 \sigma_2) T + O(1) \]
by Lemma \ref{lem_mth_dpoly}.
\end{proof}

We use the following lemma in the cases either $\sigma_1 > 1$,
$1/2 < \sigma_2 \le 1$ or $\sigma_1 \le 1$,
$3/2 \le \sigma_1 + \sigma_2 \le 2$.

\begin{lemma}  \label{lem_approx_t2}
Let $s_1 = \sigma_1 + it_1, s_2 = \sigma_2 + it_2 \in \mathbb{C}$
with $t_2 \ge 1$ and $N \in \mathbb{N}$ with $N > e^2$.
Let $C > 1$ be a given constant.
Assume that the point $(s_1, s_2) \in \mathbb{C}^2$ does not
encounter the singularities of $\zeta_2(s_1, s_2)$.
If $1 < t_2 < 2 \pi N/C$ and $1 < |t_1 + t_2| < 2 \pi N/C$, then
we have
\[ \zeta_2(s_1, s_2) = \sum_{2 \le n \le N} \biggl(
     \sum_{m=1}^{n-1} \frac{1}{m^{s_1}} \biggr)
     \frac{1}{n^{s_2}} +
   \begin{cases}
     O(t_2^{-1} N^{1 - \sigma_2}+ t_2^{-1} N^{2 - \sigma_1 - \sigma_2})
       & (s_1 \neq 1), \\
     O(t_2^{-1} N^{1 - \sigma_2} \log N) & (s_1 = 1)
   \end{cases} \]
for $\sigma_2 \ge 1/2$, $\sigma_1 + \sigma_2 > 1$ and any fixed $s_1$.
\end{lemma}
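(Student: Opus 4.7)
The plan is to follow the same template as Lemma \ref{lem_approx_t1} but with the roles of the two summation variables swapped. Start in the absolutely convergent region $\sigma_2 > 1$, $\sigma_1+\sigma_2 > 2$ by rewriting
\[
 \zeta_2(s_1,s_2) = \sum_{n=2}^{\infty} \frac{1}{n^{s_2}} \sum_{m=1}^{n-1} \frac{1}{m^{s_1}},
\]
and split the outer sum at $N$. The truncation $\sum_{2\le n\le N}$ is already the desired main term, so the task is to estimate the tail
\[
 V_2 = \sum_{n=N+1}^{\infty} \frac{1}{n^{s_2}} S(n), \qquad S(n):=\sum_{m=1}^{n-1} \frac{1}{m^{s_1}}.
\]
Since $s_1$ is fixed, the only $t_2$-dependent piece is the factor $n^{-s_2}$; we handle the $s_1 \ne 1$ case and the $s_1 = 1$ case separately because the approximation of $S(n)$ changes qualitatively at $s_1=1$.

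For $s_1 \ne 1$, apply Lemma \ref{lem_em} to $\zeta(s_1)$ at the truncation point $n-1$, choosing $M$ so that $\sigma_1 > -M+1$, to obtain
\[
 S(n) = \zeta(s_1) + \frac{n^{1-s_1}}{1-s_1} + O(n^{-\sigma_1}),
\]
with an implied constant depending only on $s_1$. Substituting splits $V_2$ into three tail sums:
\[
 V_2 = \zeta(s_1) \sum_{n>N} n^{-s_2} + \frac{1}{1-s_1}\sum_{n>N} n^{1-s_1-s_2} + O\Bigl( \sum_{n>N} n^{-\sigma_1-\sigma_2}\Bigr).
\]
Apply Lemma \ref{lem_hl} at $x=N$ to $\zeta(s_2)$ (which needs $t_2 < 2\pi N/C$) and to $\zeta(s_1+s_2-1)$ (which needs $|t_1+t_2| < 2\pi N/C$) to get
\[
 \sum_{n>N} n^{-s_2} = \frac{N^{1-s_2}}{s_2-1} + O(N^{-\sigma_2}), \quad \sum_{n>N} n^{1-s_1-s_2} = \frac{N^{2-s_1-s_2}}{s_1+s_2-2} + O(N^{1-\sigma_1-\sigma_2}).
\]
The residual tail $\sum_{n>N} n^{-\sigma_1-\sigma_2} \ll N^{1-\sigma_1-\sigma_2}$ by $\sigma_1+\sigma_2 > 1$, and since $t_2 \le 2\pi N/C$ this is absorbed into $t_2^{-1} N^{2-\sigma_1-\sigma_2}$.

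For $s_1 = 1$, replace the Euler-Maclaurin expansion by the harmonic asymptotic $S(n) = \log n + \gamma + O(n^{-1})$, which follows from Corollary \ref{cor_em} (or directly from Euler-Maclaurin applied at $s=1$). Then
\[
 V_2 = \sum_{n>N} n^{-s_2}\log n + \gamma \sum_{n>N} n^{-s_2} + O\Bigl(\sum_{n>N} n^{-\sigma_2-1}\Bigr),
\]
and the log-weighted tail is handled by Lemma \ref{lem_zetapr}, which gives $\sum_{n>N} n^{-s_2}\log n \ll t_2^{-1} N^{1-\sigma_2}\log N$; the second tail is the same $O(t_2^{-1}N^{1-\sigma_2})$ as before and is dominated. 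This yields the log-corrected bound.

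The main obstacle is the bookkeeping step that converts the algebraic factors $|s_2-1|^{-1}$ and $|s_1+s_2-2|^{-1}$ into the $t_2^{-1}$ that appears in the lemma. Because $s_1$ is fixed, one verifies $|s_1+s_2-2|^{-1} \ll_{s_1} t_2^{-1}$ by splitting into $t_2 > 2|t_1|$ (where $|s_1+s_2-2| \ge t_2 - |t_1| \ge t_2/2$) and $1 < t_2 \le 2|t_1|$ (where $|s_1+s_2-2|^{-1} \le 1 \le 2|t_1|\, t_2^{-1}$), using the hypothesis $|t_1+t_2|>1$; the bound for $|s_2-1|^{-1}$ is immediate from $t_2 \ge 1$. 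Everything else is routine tail estimation.
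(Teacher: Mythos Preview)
Your proposal is correct and follows essentially the same route as the paper: split the outer sum at $N$, expand the inner sum $S(n)$ by Euler--Maclaurin, and bound the resulting tail pieces with Lemma~\ref{lem_hl} (and Lemma~\ref{lem_zetapr} when $s_1=1$). The only differences are organizational---the paper keeps the higher Euler--Maclaurin terms as separate pieces $I_3,I_4,I_5$ rather than absorbing them into a single $O(n^{-\sigma_1})$, and reaches the $s_1=1$ case by Taylor-expanding $I_1+I_2$ around $s_1=1$ instead of inserting the harmonic asymptotic directly---and your explicit verification that $|s_1+s_2-2|^{-1}\ll_{s_1} t_2^{-1}$ makes precise a step the paper leaves implicit.
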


\begin{proof}
Let $l \in \mathbb{N}$ with $\sigma_1 > -2l$.
First, we regard $s_1$ and $s_2$ as complex variables and
assume $\sigma_1, \sigma_2 > 1$. For any $N \in \mathbb{N}$,
we have
\begin{equation} \label{eq_zeta2_u12}
  \zeta_2(s_1, s_2) = \sum_{2 \le n \le N} \biggl(
    \sum_{m=1}^{n-1} \frac{1}{m^{s_1}} \biggr)
    \frac{1}{n^{s_2}} + \sum_{n > N} \biggl(
    \sum_{m=1}^{n-1} \frac{1}{m^{s_1}} \biggr) = U_1 + U_2,
\end{equation}
say. The first term $U_1$ is obviously holomorphic in $\mathbb{C}^2$.
By setting $M=2l+1$ in Lemma \ref{lem_em}, we have
\begin{equation} \label{eq_zeta2_err}
\begin{split}
  U_2 &= \sum_{n > N} \biggl( \sum_{m=1}^{n} \frac{1}{m^{s_1}}
        - \frac{1}{n^{s_1}} \biggr) \frac{1}{n^{s_2}} \\
      &= \sum_{n > N} \biggl( \zeta(s_1) - \frac{n^{1-s_1}}{s_1 - 1}
        - \frac{n^{-s_1}}{2} - \sum_{k=1}^{M-1} \frac{B_{k+1}}{(k+1)!}
        (s_1)_k n^{-s_1 - k}
        - R_{M,n}(s_1) \biggr) \frac{1}{n^{s_2}} \\
      &= \zeta(s_1) \sum_{n > N} \frac{1}{n^{s_2}} + \frac{1}{1 - s_1}
        \sum_{n > N} \frac{1}{n^{s_1 + s_2 - 1}} - \frac{1}{2}
        \sum_{n > N} \frac{1}{n^{s_1 + s_2}} - \\
        &\quad - \sum_{k=1}^{M-1} \frac{B_{k+1}}{(k+1)!}
        (s_1)_k \sum_{n > N} \frac{1}{n^{s_1 + s_2 + k}}
        - \sum_{n > N} \frac{1}{n^{s_2}} R_{M,n}(s_1) \\
      &= \zeta(s_1) \biggl( \zeta(s_2) - \sum_{n=1}^N \frac{1}{n^{s_2}}
        \biggr) + \frac{1}{1 - s_1} \biggl( \zeta(s_1 + s_2 -1) -
        \sum_{n=1}^N \frac{1}{n^{s_1 + s_2 - 1}} \biggr) - \\
        &\quad - \frac{1}{2} \sum_{n > N} \frac{1}{n^{s_1 + s_2}} -
        \sum_{k=1}^{M-1} \frac{B_{k+1}}{(k+1)!} (s_1)_k
        \sum_{n > N} \frac{1}{n^{s_1 + s_2 + k}}
        - \sum_{n > N} \frac{1}{n^{s_2}} R_{M,n}(s_1) \\
      &= I_1 + I_2 + I_3 + I_4 + I_5,
\end{split}
\end{equation}
say. Since $I_5$ absolutely converges for
$\sigma_1 > -M + 1 = -2l$ and $\sigma_1 + \sigma_2 > -1$,
$U_2$ is continued meromorophically to $\sigma_2 > 0$,
$\sigma_1 > -2l$ and $\sigma_1 + \sigma_2 > 1$.
Now, we regard $s_1$ as a constant.

In the case $s_1 \neq 1$, by Lemma \ref{lem_hl}, we have
$I_1 \ll t_{2}^{-1}N^{1-\sigma_{2}}$ and
$I_2 \ll t_{2}^{-1}N^{2-\sigma_{1}-\sigma_{2}}$.
Also we can easily obtain
$I_3, I_4, I_5 \ll t_{2}^{-1}N^{2-\sigma_{1}-\sigma_{2}}$. This implies
the lemma for $s_1 \neq 1$.

Next we consider the case $s_1 = 1$. By (\ref{eq_zeta2_err}), we have
\[ U_2 = I_1 + I_2 + O(N^{1-\sigma_{1}-\sigma_{2}}). \]
Since we have
\[ I_2 = \frac{1}{1-s_1} \left(\zeta(s_{2}) -\sum_{n=1}^N
     \frac{1}{n^{s_2}}\right)
     - \left(\zeta'(s_{2}) +\sum_{n=1}^N \frac{\log n}{n^{s_2}}\right)
     + O(|s_1 - 1|) \]
by Taylor expansion, we obtain
\[ \zeta_{2}(1,s_{2})= U_1 + \gamma \biggl( \zeta(s_2) -
     \sum_{n=1}^N \frac{1}{n^{s_2}}\biggl) -
     \left(\zeta'(s_{2}) +\sum_{n=1}^N \frac{\log n}{n^{s_2}}\right)
     +O(N^{-\sigma_{2}}), \]
where $\gamma$ is Euler's constant.
By Lemma \ref{lem_hl} and Lemma \ref{lem_zetapr},
we have
\[ \zeta_{2}(1,s_{2})= U_1 + O(t_{2}^{-1}N^{1-\sigma_{2}}\log N). \]
This implies the lemma.
\end{proof}

\begin{proof}[Proof of Theorem \ref{th_intt2} for $s_1 \neq 1$]
We prove the theorem by the same argument as in the proof of
Theorem \ref{th_intt1}.

Let
\[ a_{n}=n^{-\sigma_{2}}\sum_{m=1}^{n-1}m^{-s_{1}}. \]
Note that
\[ \sum_{n=2}^{\infty} |a_{n}|^2 = \zeta_2^{[2]}(s_1, 2 \sigma_2) \]
in the case $\sigma_1 + \sigma_2 > 3/2$ and $\sigma_2 > 1/2$.
We take $T \ge 2$ and $N \in \mathbb{N}$ with $N > e^2$, $|t_{1}|+1<T$
and $3T < 2 \pi N/C$, where $C > 1$, and we assume $T< t_2 < 2T$.
Then we can use Lemma \ref{lem_approx_t2}, and we have
\[ \zeta_{2}(s_{1},s_{2})=\sum_{n=2}^{N}a_{n}n^{-it_{2}} +
     O(t_{2}^{-1}N^{1-\sigma_{2}} +
       t_{2}^{-1}N^{2-\sigma_{1}-\sigma_{2}})
   = I_1 + I_2, \]
say. Since
\[ a_{n} \ll
   \begin{cases}
     n^{-\sigma_{2}}& (\sigma_{1}\geq 1)\\
     n^{-\sigma_{1}-\sigma_{2}+1} & (\sigma_{1}<1)
   \end{cases} \]
by Corollary \ref{cor_em}, we obtain
\[ \sum_{n=2}^{N}na_{n}^{2}\ll
     \begin{cases}
       \log N & (\sigma_{2}=1,\sigma_{1}\geq1)\\
       N^{2-2\sigma_{2}} & (\sigma_{2}<1,\sigma_{1}\geq1) \\
       \log N & (\sigma_{1}+\sigma_{2}=2,\sigma_{1}<1)\\
       N^{4-2\sigma_{1}-2\sigma_{2}} &
         (\sigma_{1}+\sigma_{2}<2,\sigma_{1}<1)
     \end{cases} \]
and
\[ I_{1}\ll\sum_{n=2}^{N}a_{n}\ll
   \begin{cases}
     \log N & (\sigma_{2}=1,\sigma_{1}\geq1)\\
     N^{1-\sigma_{2}} & (\sigma_{2}<1,\sigma_{1}\geq1) \\
     \log N & (\sigma_{1}+\sigma_{2}=2,\sigma_{1}<1)\\
     N^{2-\sigma_{1}-\sigma_{2}}  &
       (\sigma_{1}+\sigma_{2}<2,\sigma_{1}<1).
   \end{cases} \]
Therefore we have
\[ \int_{T}^{2T}|I_{1}|^{2}dt_{2}=T\sum_{n=2}^{N}|a_{n}|^{2}+
     \begin{cases}
       O(\log N) & (\sigma_{2}=1,\sigma_{1}\geq1)\\
       O(N^{2-2\sigma_{2}}) & (\sigma_{2}<1,\sigma_{1}\geq1) \\
       O( \log N) & (\sigma_{1}+\sigma_{2}=2,\sigma_{1}<1)\\
       O(N^{4-2\sigma_{1}-2\sigma_{2}} ) &
         (\sigma_{1}+\sigma_{2}<2,\sigma_{1}<1)
   \end{cases} \]
by Lemma \ref{lem_mth_dpoly} and
\begin{align*}
\int_{T}^{2T}|I_{1}I_{2}|dt_{2} &\ll
   \begin{cases}
     (N^{1-\sigma_{2}}+N^{2-\sigma_{1}-\sigma_{2}})\log N&
       (\sigma_{2}=1,\sigma_{1}\geq1)\\
     (N^{1-\sigma_{2}}+N^{2-\sigma_{1}-\sigma_{2}})N^{1-\sigma_{2}} &
       (\sigma_{2}<1,\sigma_{1}\geq1) \\
     (N^{1-\sigma_{2}}+N^{2-\sigma_{1}-\sigma_{2}})\log N &
       (\sigma_{1}+\sigma_{2}=2,\sigma_{1}<1)\\
     (N^{1-\sigma_{2}}+N^{2-\sigma_{1}-\sigma_{2}})
       N^{2-\sigma_{1}-\sigma_{2}} &
       (\sigma_{1}+\sigma_{2}<2,\sigma_{1}<1)
   \end{cases} \\
&\ll \begin{cases}
       \log N & (\sigma_{2}=1,\sigma_{1}\geq1)\\
       N^{2-2\sigma_{2}} & (\sigma_{2}<1,\sigma_{1}\geq1) \\
       \log N & (\sigma_{1}+\sigma_{2}=2,\sigma_{1}<1)\\
       N^{4-2\sigma_{1}-2\sigma_{2}} &
         (\sigma_{1}+\sigma_{2}<2,\sigma_{1}<1).
     \end{cases}
\end{align*}
On the other hand, we have
\[ \int_{T}^{2T}|I_{2}|^{2}dt_{2} \ll
     T^{-1}(N^{2-2\sigma_{2}}+N^{4-2\sigma_{1}-2\sigma_{2}}). \]
Therefore we have
\[ \int_{T}^{2T}|\zeta_{2}(s_{1},s_{2})|^{2}dt_{2} =
     T\sum_{n=2}^{N}|a_{n}|^{2}+
     \begin{cases}
       O(\log N)& (\sigma_{2}=1,\sigma_{1}\geq1)\\
       O(N^{2-2\sigma_{2}}) &
         (\sigma_{2}<1,\sigma_{1}\geq1) \\
       O( \log N) & (\sigma_{1}+\sigma_{2}=2,\sigma_{1}<1)\\
       O(N^{4-2\sigma_{1}-2\sigma_{2}} ) &
         (\sigma_{1}+\sigma_{2}<2,\sigma_{1}<1).
   \end{cases} \]
By setting $N=[T]+1$, we obtain
\begin{equation} \label{eval_intt2}
\int_{T}^{2T}|\zeta_{2}(s_{1},s_{2})|^{2}dt_{2} =
  T\sum_{n \le T} |a_{n}|^{2}+
  \begin{cases}
       O(\log T)& (\sigma_{2}=1,\sigma_{1}\geq1)\\
       O(T^{2-2\sigma_{2}}) &
         (\sigma_{2}<1,\sigma_{1}\geq1) \\
       O( \log T) & (\sigma_{1}+\sigma_{2}=2,\sigma_{1}<1)\\
       O(T^{4-2\sigma_{1}-2\sigma_{2}} ) &
         (\sigma_{1}+\sigma_{2}<2,\sigma_{1}<1).
   \end{cases}
\end{equation}
Therefore, in the case $\sigma_1 + \sigma_2 > 3/2$ and
$\sigma_2 > 1/2$, we have
\begin{equation} \label{result_intt2_1}
\int_{T}^{2T}|\zeta_{2}(s_{1},s_{2})|^{2}dt_{2} =
  \zeta_2^{[2]}(s_1, 2 \sigma_2) T +
  \begin{cases}
       O(\log T)& (\sigma_{2}=1,\sigma_{1}\geq1)\\
       O(T^{2-2\sigma_{2}}) &
         (\sigma_{2}<1,\sigma_{1}\geq1) \\
       O( \log T) & (\sigma_{1}+\sigma_{2}=2,\sigma_{1}<1)\\
       O(T^{4-2\sigma_{1}-2\sigma_{2}} ) &
         (\sigma_{1}+\sigma_{2}<2,\sigma_{1}<1).
   \end{cases}
\end{equation}
In the case $\sigma_{1}>1, \sigma_{2}=1/2$, since
\[ a_{n}=n^{-\sigma_{2}}\sum_{m=1}^{n-1}m^{-s_{1}}=
     n^{-\sigma_{2}}(\zeta(s_{1})+O(n^{-\sigma_{1}+1})) \]
by Lemma \ref{lem_em}, we have
\[ |a_{n}|^{2}=n^{-1}|\zeta(s_{1})+O(n^{-\sigma_{1}+1})|^{2}=
     n^{-1}|\zeta(s_{1})|^{2}+O(n^{-\sigma_{1}}). \]
Therefore we obtain
\begin{equation} \label{result_intt2_2}
\int_{T}^{2T}|\zeta_{2}(s_{1},s_{2})|^{2}dt_{2}=
   |\zeta(s_{1})|^{2}T\log T+O(T)
\end{equation}
by (\ref{eval_intt2}).
In the case $\sigma_{1}<1$ and $\sigma_{1}+\sigma_{2}=3/2$, since
\[ a_n = n^{-\sigma_{2}} \Bigl( \frac{n^{-s_{1}+1}}{s_{1}-1}+
     O(n^{-\sigma_{1}})+O(1) \Bigr) \]
by Lemma \ref{lem_em}, we have
\begin{align*}
  |a_{n}|^{2} &= n^{-2\sigma_{2}}
    \biggl( \biggl|\frac{n^{-s_{1}+1}}{s_{1}-1} \biggr|^{2}
    +O(n^{-2\sigma_{1}+1})+O(n^{-\sigma_{1}+1}) \biggr) \\
  &= \frac{n^{-1}}{|{s_{1}-1}|^{2}}+O(n^{-2})+O(n^{-2+\sigma_{1}}).
\end{align*}
Therefore we obtain
\begin{equation} \label{result_intt2_3}
\int_{T}^{2T}|\zeta_{2}(s_{1},s_{2})|^{2}dt_{2}=
  \frac{T\log T}{|s_{1}-1|^{2}}+O(T)
\end{equation}
by (\ref{eval_intt2}).
In the case $\sigma_{1}=1$ and $\sigma_{2}=1/2$, since
\[ a_n = n^{-\sigma_{2}} \biggl( \zeta(s_{1})-
     \frac{n^{-s_{1}+1}}{s_{1}-1}+O(n^{-\sigma_{1}}) \biggr) \]
by Lemma \ref{lem_em}, we have
\[ \sum_{n \le T}|a_{n}|^{2}=\sum_{n \le T} \biggl(n^{-1} \biggl|\zeta(s_{1})-
     \frac{n^{-s_{1}+1}}{s_{1}-1} \biggr|^{2}+
     O(n^{-2}) \biggr) \]
by Corollary \ref{cor_em}. Since we have
\begin{align*}
&\sum_{n \le T}n^{-1} \biggl|\zeta(s_{1})-
  \frac{n^{-s_{1}+1}}{s_{1}-1} \biggr|^{2} \\
&= (|\zeta(s_{1})|^{2} +|s_{1}-1|^{-2})\log T
  -2 \sum_{n \le T}\Re\left(\overline{\zeta(s_{1})}
  \frac{n^{-s_{1}}}{s_{1}-1}\right) + O(1) \\
&= (|\zeta(s_{1})|^{2} +|s_{1}-1|^{-2})\log T + O(1)
\end{align*}
by Corollary \ref{cor_em}, we have
\[ \sum_{n \le T}|a_{n}|^{2}=(|\zeta(s_{1})|^{2}+|s_{1}-1|^{-2})\log T
     + O(1). \]
Therefore we obtain
\begin{equation} \label{result_intt2_4}
\int_{T}^{2T}|\zeta_{2}(s_{1},s_{2})|^{2}dt_{2}=
(|\zeta(s_{1})|^{2}+|s_{1}-1|^{-2})T\log T+O(T)
\end{equation}
by (\ref{eval_intt2}). By (\ref{result_intt2_1}), (\ref{result_intt2_2}),
(\ref{result_intt2_3}) and (\ref{result_intt2_4}), we can obtain
the theorem by the same argument as in the proof of Theorem
\ref{th_intt1}.
\end{proof}

\begin{proof}[Proof of Theorem \ref{th_intt2} for $s_1 = 1$]
We prove the theorem by the same argument as in the proof of
Theorem \ref{th_intt1}.

Hereafter we use the same notations as in the previous proof.
Note that, in this case, we have
$I_2 = O(t_{2}^{-1}N^{1-\sigma_{2}}\log N)$
by using Lemma \ref{lem_approx_t2}.
Since $a_{n}\ll n^{-\sigma_{2}}\log n$, we obtain
\[ \sum_{n=2}^{N}n|a_{n}|^{2}\ll\sum_{n=2}^{N}
     n^{1-2\sigma_{2}}(\log n)^2 \ll
     \begin{cases}
       O((\log N)^3) & (\sigma_{2}=1)\\
       O(N^{2-2\sigma_{2}}(\log N)^2) & (\sigma_{2}<1)\\
        \end{cases} \]
and
\[ I_{1}\ll\sum_{n=2}^{N}|a_{n}|\ll
     \begin{cases}
       (\log N)^2 & (\sigma_{2}=1)\\
       N^{1-\sigma_{2}}\log N & (\sigma_{2}<1). \\
   \end{cases} \]
Therefore we have
\[ \int_{T}^{2T}|I_{1}|^{2}dt_{2}=T\sum_{n=2}^{N}|a_{n}|^{2}+
     \begin{cases}
       O((\log N)^3) & (\sigma_{2}=1)\\
       O(N^{2-2\sigma_{2}} (\log N)^2) & (\sigma_{2}<1)\\
   \end{cases} \]
by Lemma \ref{lem_mth_dpoly} and
\[ \int_{T}^{2T}|I_{1}I_{2}|dt_{2}\ll
     \begin{cases}
       O((\log N)^3) & (\sigma_{2}=1)\\
       O(N^{2-2\sigma_{2}} (\log N)^2) & (\sigma_{2}<1).\\
\end{cases} \]
On the other hand, we have
\[ \int_{T}^{2T}|I_{2}|^{2}dt_{2}\ll T^{-1}N^{2-2\sigma_{2}}(\log N)^2. \]
Therefore, by setting $N = [T] + 1$, we obtain
\begin{equation} \label{eval_intt2_s11}
\int_{T}^{2T}|\zeta_{2}(s_{1},s_{2})|^{2}dt_{2}=
   T\sum_{n \le T}|a_{n}|^{2}+
   \begin{cases}
     O((\log T)^3)& (\sigma_{2}=1)\\
     O(T^{2-2\sigma_{2}} (\log T)^2) & (\sigma_{2}<1). \\
   \end{cases}
\end{equation}
In the case $\sigma_{2}>1/2$, we have
\begin{equation} \label{result_intt2_s11_1}
\int_{T}^{2T}|\zeta_{2}(s_{1},s_{2})|^{2}dt_{2}=
  \zeta_2^{[2]}(s_1, 2 \sigma_2) T+
  \begin{cases}
    O((\log T)^3)& (\sigma_{2}=1)\\
    O(T^{2-2\sigma_{2}} (\log T)^2) & (\sigma_{2}<1).
   \end{cases}
\end{equation}
In the case $\sigma_{2}=1/2$, since
\[ |a_{n}|^{2}=n^{-1} \Bigl(\sum_{m=1}^{n-1}m^{-1} \Bigr)^{2}=
     \frac{(\log n)^2}{n}+O \Bigl( \frac{\log n}{n} \Bigr) \]
and
\[ \sum_{n=2}^N \frac{(\log n)^2}{n}
   = \int_{1}^{N}x^{-1}(\log x)^2 dx + O(1)
   = \frac{(\log N)^3}{3} + O(1)\]
hold, we have
\begin{equation}  \label{result_intt2_s11_2}
\int_{T}^{2T}|\zeta_{2}(s_{1},s_{2})|^{2}dt_{2}=
  \frac{T(\log T)^3}{3}+O(T(\log T)^2)
\end{equation}
by (\ref{eval_intt2_s11}).
By (\ref{result_intt2_s11_1}) and (\ref{result_intt2_s11_2}),
we can obtain the theorem by the same argument as in the proof
of Theorem \ref{th_intt1}.
\end{proof}

\section{Proof of Theorem \ref{th_intt}}
We divide the proof into four cases.

\begin{proof}[Proof of Theorem \ref{th_intt} for $\sigma_2 > 1$
and $\sigma_1 + \sigma_2 > 2$]
We set
\[ a_k = \biggl( \sum_{\substack{m \mid k \\ m < \sqrt{k}}}
           \frac{1}{m^{\sigma_1 - \sigma_2}} \biggr)
           \frac{1}{k^{\sigma_2}} \]
for $k \in \mathbb{N}$. We have
\begin{align*}
  \zeta_2(s_1, s_2) &= \sum_{1 \le m < n}
    \frac{1}{m^{\sigma_1} n^{\sigma_2} (mn)^{it}} \\
  &= \sum_{k \ge 2} \biggl( \sum_{\substack{mn = k \\ m < n}}
    \frac{1}{m^{\sigma_1} n^{\sigma_2}} \biggr)
    \frac{1}{k^{it}} \\
  &= \sum_{k \ge 2} \biggl( \sum_{\substack{m \mid k \\ m < \sqrt{k}}}
    \frac{1}{m^{\sigma_1 - \sigma_2}} \biggr)
    \frac{1}{k^{\sigma_2 + it}} \\
  &= \sum_{k \ge 2} a_k k^{-it}.
\end{align*}
Since
\[ \sum_{k \ge 2} k |a_k|^2 = \sum_{k \ge 2}^{\infty} 
     \biggl( \sum_{\substack{m \mid k \\ m < \sqrt{k}}}
     \frac{1}{m^{\sigma_1 - \sigma_2}} \biggr)^2
     \frac{1}{k^{2 \sigma_2 - 1}} \]
converges by (\ref{eval_zeta2b}), we have
\[ I^{\Box}(T) = \zeta_2^{\Box} (\sigma_1, \sigma_2) T + O(1) \]
by Lemma \ref{lem_mth_dpoly}.
\end{proof}

We use the following lemma in the cases either $\sigma_1 > 1$,
$1/2 < \sigma_2 \le 1$ or $\sigma_1 \le 1$,
$3/2 < \sigma_1 + \sigma_2 \le 2$.

\begin{lemma}  \label{lem_approx}
Let $\sigma_1 + \sigma_2 > 1$, $\sigma_2 > 0$,
$s_1 = \sigma_1 + it$ and $s_2 = \sigma_2 + it$. Then
\[ \zeta_2(s_1 , s_2) = \sum_{n \leq t} n^{-s_2} \sum^{n - 1}_{m = 1} m^{-s_1}
   + \begin{cases}
       O(t^{-\sigma_2}) & (\sigma_1 > 1) \\
       O(t^{-\sigma_2 + \epsilon}) & (\sigma_1 = 1) \\
       O(t^{1 - \sigma_1 - \sigma_2}) & (\sigma_1 < 1)
     \end{cases} \]
holds for $t \ge 2$.
\end{lemma}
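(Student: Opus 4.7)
The plan is to adapt the proof of Lemma \ref{lem_approx_t2} to the diagonal specialization $t_1=t_2=t$, with the choice $N=\lfloor t\rfloor$. Starting in the region $\sigma_1,\sigma_2>1$ where (\ref{eq_db_zeta}) converges absolutely, I would split $\zeta_2(s_1,s_2)=U_1+U_2$ as in (\ref{eq_zeta2_u12}), where $U_1$ is exactly the partial sum appearing in the statement. Writing $\sum_{m=1}^{n-1}m^{-s_1}=\sum_{m=1}^{n}m^{-s_1}-n^{-s_1}$ and substituting the Euler--Maclaurin expansion of Lemma \ref{lem_em} (with $M=2l+1$) for the inner sum, I obtain a decomposition $U_2=I_1+I_2+I_3+I_4+I_5$ of exactly the same shape as in the proof of Lemma \ref{lem_approx_t2}, with
\[ I_1=\zeta(s_1)\Bigl(\zeta(s_2)-\sum_{n\le N}n^{-s_2}\Bigr),\qquad I_2=\frac{1}{1-s_1}\Bigl(\zeta(s_1+s_2-1)-\sum_{n\le N}n^{-(s_1+s_2-1)}\Bigr), \]
and $I_3,I_4,I_5$ the Bernoulli-correction and $R_{M,n}(s_1)$-remainder tails. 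Analytic continuation in $(s_1,s_2)$ then extends the identity to the stated region $\sigma_2>0$, $\sigma_1+\sigma_2>1$.

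Next I would estimate each $I_j$ using Lemma \ref{lem_hl}, exploiting the facts that $\Im(s_2)=t$, $\Im(s_1+s_2-1)=2t$, and $|1-s_1|\asymp t$. With $C\in(1,\pi)$ and $N=\lfloor t\rfloor$ both applications of Hardy--Littlewood are legitimate and yield
\[ \Bigl|\zeta(s_2)-\sum_{n\le N}n^{-s_2}\Bigr|\ll t^{-\sigma_2}, \qquad \Bigl|\zeta(s_1+s_2-1)-\sum_{n\le N}n^{-(s_1+s_2-1)}\Bigr|\ll t^{1-\sigma_1-\sigma_2}. \]
For $I_1$ I would multiply by the pointwise bound $|\zeta(\sigma_1+it)|\ll 1$ for $\sigma_1>1$, $\ll\log t$ for $\sigma_1=1$, and $\ll t^{1-\sigma_1}$ for $\sigma_1<1$ (the last obtained directly from Lemma \ref{lem_hl} applied to $\zeta(s_1)$ itself with $x=t$); in each case the product yields exactly the claimed error. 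The term $I_2$ carries an additional factor $1/|1-s_1|\asymp 1/t$, so its contribution is strictly smaller than what is required in every case.

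The main obstacle is the treatment of $I_3,I_4,I_5$. Because here $s_1=\sigma_1+it$ itself depends on $t$ (unlike in Lemma \ref{lem_approx_t2}, where $s_1$ was held fixed), the Pochhammer factor $(s_1)_k$ is of size $t^k$, so a naive estimate of the Euler--Maclaurin remainder $R_{M,n}(s_1)$ loses $t^M$ no matter how $M$ is chosen and spoils the target bound. This is exactly the point at which one invokes the technique of Kiuchi and Tanigawa \cite{kiu_tani}: by integrating by parts against the periodic Bernoulli function $B_M(x-[x])$ and exploiting cancellation in the resulting oscillatory integrals, one obtains estimates for $R_{M,n}(s_1)$ sharp enough that $I_3+I_4+I_5$ is absorbed by the stated error. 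Once these refined bounds are in hand, assembling the estimates for $I_1,\ldots,I_5$ with $N=\lfloor t\rfloor$ delivers the three-case error term and completes the proof.
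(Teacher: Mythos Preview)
Your approach coincides with the paper's: the same $U_1+U_2$ split with $N=[t]$, the same Euler--Maclaurin decomposition $U_2=I_1+\cdots+I_5$, and the correct identification of the Kiuchi--Tanigawa input as the essential ingredient (the paper uses Corollary~\ref{cor_kiu_tani} for $I_1,I_2$ where you use Lemma~\ref{lem_hl}, but the resulting bounds are identical). One small clarification: $I_3$ and $I_4$ are not actually obstacles, since the Pochhammer factor $(s_1)_k\asymp t^k$ is exactly balanced by the extra $N^{-k}\asymp t^{-k}$ in the tail $\sum_{n>N}n^{-(s_1+s_2+k)}\ll N^{1-\sigma_1-\sigma_2-k}$, so trivial estimation already gives $I_3,I_4\ll t^{1-\sigma_1-\sigma_2}$; only $I_5$ genuinely requires the refined bound $R_{M,n}(s_1)=O(t^{M}n^{-\sigma_1-M})$ for $n>|t|/4$, which the paper simply quotes as Lemma~\ref{lem_kiu_tani} rather than re-deriving via integration by parts.
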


In order to prove Lemma \ref{lem_approx}, we use the following
lemma and corollary.

\begin{lemma}[Lemma 2.2 in \cite{kiu_tani}] \label{lem_kiu_tani}
Let $s = \sigma + it$, $|t| > 1$. For $N > \frac{1}{4} |t|$,
$m \ge 1$ and $\sigma > -2m -1$, we have
\begin{align*}
  \zeta(s) = \sum_{n \le N} \frac{1}{n^s} &+ \frac{N^{1-s}}{s-1}
     - \frac{N^{-s}}{2} + \sum_{k=1}^{2m} \frac{B_{k+1}}{(k+1)!}
     (s)_k N^{-(s+k)} + \\
     &+ O \bigl( |t|^{2m+1} N^{- \sigma - 2m - 1} \bigr),
\end{align*}
where the implied constant does not depend on $t$.
\end{lemma}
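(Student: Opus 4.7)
The plan is to follow the proof of Lemma \ref{lem_approx_t2} almost verbatim, with two adjustments tailored to the diagonal situation $t_1 = t_2 = t$: I specialize to the natural cutoff $N = [t]$, and I use Lemma \ref{lem_kiu_tani} in place of Hardy--Littlewood (Lemma \ref{lem_hl}), since Lemma \ref{lem_kiu_tani} is valid in the range $N > |t|/4$ that one needs when $N \asymp t$.

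First, in the absolutely convergent region $\sigma_1, \sigma_2 > 1$, I would split $\zeta_2(s_1, s_2) = U_1 + U_2$ as in (\ref{eq_zeta2_u12}), write the inner sum of $U_2$ as $\sum_{m=1}^{n} m^{-s_1} - n^{-s_1}$, and apply Lemma \ref{lem_em} (with $M = 2l+1$) to $\sum_{m=1}^{n} m^{-s_1}$. Exactly as in the derivation of (\ref{eq_zeta2_err}), this produces $U_2 = I_1 + I_2 + I_3 + I_4 + I_5$ with
\[ I_1 = \zeta(s_1)\Bigl(\zeta(s_2) - \sum_{n \le N} n^{-s_2}\Bigr), \qquad I_2 = \frac{1}{1-s_1}\Bigl(\zeta(s_1+s_2-1) - \sum_{n \le N} n^{-(s_1+s_2-1)}\Bigr), \]
and with $I_3, I_4, I_5$ absolutely convergent tails of $n^{-(s_1+s_2+k)}$ and $n^{-s_2} R_{M,n}(s_1)$. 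Since $I_5$ converges for $\sigma_1 > -2l$ and $\sigma_1+\sigma_2 > -1$, this identity extends meromorphically to the region $\sigma_1 + \sigma_2 > 1$, $\sigma_2 > 0$ of the statement.

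Now I choose $N = [t]$. The hypothesis $N > |t|/4$ of Lemma \ref{lem_kiu_tani} holds for both $s_2$ (imaginary part $t$) and $s_1+s_2-1$ (imaginary part $2t$), so that lemma yields
\[ \zeta(s_2) - \sum_{n \le N} n^{-s_2} = \frac{N^{1-s_2}}{s_2-1} + O(t^{-\sigma_2}) \ll t^{-\sigma_2}, \]
and, using $|s_1 + s_2 - 2| \asymp t$, also $\zeta(s_1+s_2-1) - \sum_{n \le N} n^{-(s_1+s_2-1)} \ll t^{1-\sigma_1-\sigma_2}$. Combined with $|1-s_1| \asymp t$ this gives $|I_2| \ll t^{-\sigma_1-\sigma_2}$, while direct tail estimation (using $|R_{M,n}(s_1)| \ll t^M n^{-\sigma_1-M}$ for $I_5$) yields $I_3, I_4, I_5 \ll t^{1-\sigma_1-\sigma_2}$. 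Each such contribution is uniformly subdominant to the claimed error in every regime.

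The decisive term is $I_1$, controlled by $|\zeta(s_1)|$. I would invoke the standard bounds $\zeta(s_1) \ll 1$ for $\sigma_1 > 1$, $\zeta(1+it) \ll \log t$ for $\sigma_1 = 1$, and $\zeta(s_1) \ll t^{1-\sigma_1}$ for $\sigma_1 < 1$ (each obtainable from Lemma \ref{lem_hl} applied at $x = t$, with the functional equation used when $\sigma_1 \le 0$). Combined with the bound $|\zeta(s_2) - \sum_{n \le N} n^{-s_2}| \ll t^{-\sigma_2}$ above, these yield $|I_1| \ll t^{-\sigma_2}$, $t^{-\sigma_2}\log t$, and $t^{1-\sigma_1-\sigma_2}$ in the three respective cases, with the $\log t$ absorbed into $t^\epsilon$. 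The main obstacle is only bookkeeping: verifying that each of $I_2, \ldots, I_5$ together with the main term $\zeta(s_1) N^{1-s_2}/(s_2-1)$ of $I_1$ is uniformly dominated by the stated bound in all three regimes. The switch from Lemma \ref{lem_hl} to Lemma \ref{lem_kiu_tani} is precisely what makes this uniformity available at the critical choice $N = [t]$.
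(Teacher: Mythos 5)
You have proved the wrong statement. Lemma \ref{lem_kiu_tani} is a single-variable assertion about $\zeta(s)$ --- an Euler--Maclaurin formula whose remainder is $O(|t|^{2m+1}N^{-\sigma-2m-1})$, with implied constant independent of $t$, valid for $N > \frac{1}{4}|t|$ --- and the paper offers no proof of it: it is imported verbatim as Lemma 2.2 of \cite{kiu_tani}. What you have outlined is instead a proof of Lemma \ref{lem_approx}, the diagonal approximation to $\zeta_2(\sigma_1+it,\sigma_2+it)$; your outline in fact tracks the paper's own proof of \emph{that} lemma quite closely (the splitting (\ref{eq_zeta2_u12})--(\ref{eq_zeta2_err}), the choice $N=[t]$, the treatment of $I_1,\dots,I_5$). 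But as a proof of the assigned statement it is circular: your first ``adjustment'' is to use Lemma \ref{lem_kiu_tani} in place of Lemma \ref{lem_hl}, i.e.\ you invoke the target lemma as an ingredient, and nowhere do you estimate the Euler--Maclaurin remainder $R_{M,N}(s)$, which is the lemma's entire content.

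A genuine proof starts from Lemma \ref{lem_em} with $M=2m+1$ and must show $R_{M,N}(s)\ll |t|^{2m+1}N^{-\sigma-2m-1}$. The trivial bound fails: since $|(s)_M|\ll_{\sigma,m}|t|^{M}$ for $|t|>1$, estimating $\int_N^\infty |B_M(x-[x])|\,x^{-\sigma-M}dx \ll N^{1-\sigma-M}$ gives only $O(|t|^{2m+1}N^{-\sigma-2m})$, a factor $N$ too large. The saving must come from oscillation, and this is exactly where the hypothesis $N>\frac{1}{4}|t|$ enters: expand the periodic Bernoulli polynomial in its Fourier series, $B_M(x-[x]) = -M!\sum_{n\neq 0}(2\pi i n)^{-M}e^{2\pi i n x}$, so that $R_{M,N}(s)$ becomes a sum of oscillatory integrals $\int_N^\infty e^{2\pi i n x}x^{-s-M}dx$. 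The phase $nx-\frac{t}{2\pi}\log x$ has derivative $n-\frac{t}{2\pi x}$, and for $x\ge N>\frac{1}{4}|t|$ one has $\frac{|t|}{2\pi x}<\frac{2}{\pi}<1$, so the derivative is $\gg |n|$ uniformly in $t$; the first-derivative test (in the spirit of Lemma \ref{lem_exp_sum}) then gives $\ll N^{-\sigma-M}/|n|$ for each integral, the sum over $n$ converges thanks to the factor $|n|^{-M}$, and multiplying by $|(s)_M|/M!\ll|t|^{M}$ yields $R_{M,N}(s)\ll |t|^{2m+1}N^{-\sigma-2m-1}$ as claimed. None of this appears in your proposal, so the attempt has a genuine gap --- indeed it assumes the conclusion.
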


\begin{corollary}[Corollary 2.3 in \cite{kiu_tani}] \label{cor_kiu_tani}
Let $s = \sigma + it$, $|t| > 1$. For $N > \frac{1}{4} |t|$ and
$\sigma > -3$, we have
\[ \zeta(s) = \sum_{n \le N} \frac{1}{n^s} + \frac{N^{1-s}}{s-1}
     - \frac{N^{-s}}{2} + \frac{s}{12} N^{-s-1} +
     O \bigl( |t|^3 N^{- \sigma - 3} \bigr), \]
where the implied constant does not depend on $t$.
\end{corollary}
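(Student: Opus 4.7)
The plan is to deduce the corollary as the special case $m=1$ of Lemma \ref{lem_kiu_tani}. Setting $m=1$ produces exactly the hypothesis $\sigma > -2m-1 = -3$ required in the corollary, and the error term $O(|t|^{2m+1} N^{-\sigma-2m-1})$ becomes $O(|t|^3 N^{-\sigma-3})$ as stated. The only work left is to reconcile the finite sum $\sum_{k=1}^{2}\frac{B_{k+1}}{(k+1)!}(s)_k N^{-(s+k)}$ appearing in Lemma \ref{lem_kiu_tani} with the single explicit term $\frac{s}{12}N^{-s-1}$ in the corollary.

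For the $k=1$ contribution, I would compute $\frac{B_2}{2!}(s)_1 N^{-(s+1)} = \frac{1/6}{2}\, s\, N^{-s-1} = \frac{s}{12}N^{-s-1}$, which gives precisely the displayed term. For the $k=2$ contribution, I would invoke the standard fact that $B_3 = 0$ (more generally, $B_{2j+1}=0$ for $j \ge 1$), so the coefficient $\frac{B_3}{3!}(s)_2$ vanishes and the $k=2$ summand drops out entirely.

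Putting these two observations together, the sum collapses to the single term $\frac{s}{12}N^{-s-1}$, and the rest of the expansion (the principal term $\sum_{n \le N} n^{-s}$, the pole term $\frac{N^{1-s}}{s-1}$, the boundary correction $-\frac{N^{-s}}{2}$, and the error $O(|t|^3 N^{-\sigma-3})$) is copied verbatim from Lemma \ref{lem_kiu_tani}. Since the implied constant in that lemma is independent of $t$, so is the one in the corollary. There is no genuine obstacle here: the corollary is a bookkeeping consequence of the lemma, with the only nontrivial input being the vanishing of $B_3$.
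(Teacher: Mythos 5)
Your proposal is correct and is exactly the intended derivation: the paper quotes this result as an immediate corollary of Lemma \ref{lem_kiu_tani}, obtained by setting $m=1$, computing the $k=1$ term $\frac{B_2}{2!}(s)_1 N^{-(s+1)} = \frac{s}{12}N^{-s-1}$, and discarding the $k=2$ term via $B_3=0$. Your observation that the vanishing of $B_3$ is genuinely needed (the $k=2$ term could not be absorbed into $O(|t|^3 N^{-\sigma-3})$, since $N$ may greatly exceed $|t|$) is accurate and is the only non-mechanical point in the argument.
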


The following proof is similar to that in \cite{kiu_tani}
(section 4.1 Evaluation of $S_2(s_1, s_2)$).

\begin{proof}[Proof of Lemma \ref{lem_approx}]
Let $l \in \mathbb{N}$ with $\sigma_1 > -2l$.
We use (\ref{eq_zeta2_u12}) and (\ref{eq_zeta2_err}).
Hence we obtain the analytic continuation of $\zeta_2(s_1,s_2)$
for $\sigma_2 > 0$, $\sigma_1 > -2l$ and $\sigma_1 + \sigma_2 > 1$.
Now, we set $s_1 = \sigma_1 + it$, $s_2 = \sigma_2 + it$
with $t \ge 1$ and $N = [t]$. Then we have
\begin{align*}
  I_1 &= \zeta(s_1) \Biggl( \frac{N^{1-s_2}}{s_2 - 1} - \frac{N^{-s_2}}{2} +
        \frac{s_2}{12} N^{-s_2 - 1} + O \bigl(
        |t|^3 N^{-\sigma_2 - 3} \bigl) \Biggr) \\
      &\ll | \zeta(s_1) | t^{-\sigma_2} \\
      &\ll
      \begin{cases}
        t^{-\sigma_2} & (\sigma_1 > 1) \\
        t^{-\sigma_2 + \epsilon} & (\sigma_1 = 1) \\
        t^{1-\sigma_1 -\sigma_2} & (\sigma_1 < 1)
      \end{cases}
\end{align*}
for $\sigma_2 > -3$ by Corollary \ref{cor_kiu_tani}.
Similarly, we have
\begin{align*}
 I_2 &= \frac{1}{1-s_1} \Biggl(
       \frac{N^{2-s_1 -s_2}}{s_1 + s_2 -2} - \frac{1}{2}
       N^{1 -s_1 -s_2} + \frac{s_1 + s_2 -1}{12} N^{-s_1 - s_2} + \\
       &\qquad + O \bigl( |t|^3 N^{1-\sigma_1 -\sigma_2 -3} \bigr)
       \Biggr) \\
     &\ll t^{1-\sigma_1 -\sigma_2}
\end{align*}
for $\sigma_1 + \sigma_2 > -2$.
Since $\sigma_1 + \sigma_2 > 1$, we have
\[ I_j \ll t^{1-\sigma_1 -\sigma_2} \qquad (j=3,4). \]
On the other hand, $R_{M,n}(s_1) = O(t^M n^{- \sigma_1 -M})$
for $\sigma_1 > -M$ by Lemma \ref{lem_kiu_tani}. Hence we have
\[ I_5 \ll t^M \sum_{n > N} \frac{1}{n^{\sigma_1 + \sigma_2 + M}}
     \ll t^{1-\sigma_1 -\sigma_2}. \]
This implies the lemma.
\end{proof}

First, we consider the case $\sigma_1 > \sigma_2$. Especially,
this condition is satisfied when $\sigma_1 > 1$ and $1/2 < \sigma_2 \le 1$.

\begin{proof}[Proof of Theorem \ref{th_intt}
for $\sigma_1 > \sigma_2$]
If we set
\begin{align*}
  A(s_1 , s_2) = \sum_{n \leq t} n^{-s_2} \sum^{n - 1}_{m = 1} m^{-s_1}
\end{align*}
then we have
\begin{align*}
  &\int^T_2 |A(s_1 , s_2)|^2 dt =
    \int^T_2 \biggl( \sum_{n_1 \leq t} n_1^{-s_2}
    \sum^{n_1 - 1}_{m_1 = 1} m_1^{-s_1}
    \sum_{n_2 \leq t} n_2^{-\overline{s_2}}
    \sum^{n_2 - 1}_{m_2 = 1} m_2^{-\overline{s_1}} \biggr) dt \\
  &= \sum_{2 \leq n_1 \leq T} \sum^{n_1 - 1}_{m_1 = 1}
    \sum_{2 \leq n_2 \leq T} \sum^{n_2 - 1}_{m_2 = 1}
    n_1^{-\sigma_2} m_1^{-\sigma_1} n_2^{-\sigma_2} m_2^{-\sigma_1}
    \int^T_{M(n_1 , n_2)} \biggl( \frac{m_2 n_2}{m_1 n_1} \biggr)^{it} dt \\
  &= \sum_{m_1 n_1 = m_2 n_2}
    \sum_{\substack{1 \leq m_1 \leq n_1 - 1 \\ 2 \leq n_1 \leq T}}
    \sum_{\substack{1 \leq m_2 \leq n_2 - 1 \\ 2 \leq n_2 \leq T}}
    n_1^{-\sigma_2} m_1^{-\sigma_1} n_2^{-\sigma_2} m_2^{-\sigma_1}
    \biggl(T - M(n_1 , n_2) \biggr) \\
  &\ \ \ + \sum_{m_1 n_1 \neq m_2  n_2}
    \sum_{\substack{1 \leq m_1 \leq n_1 - 1 \\ 2 \leq n_1 \leq T}}
    \sum_{\substack{1 \leq m_2 \leq n_2 - 1 \\ 2 \leq n_2 \leq T}}
    n_1^{-\sigma_2} m_1^{-\sigma_1} n_2^{-\sigma_2} m_2^{-\sigma_1} \\
  & \ \ \ \ \ \ \ \times
    \frac{\exp \biggl(iT \log \Bigl( \frac{m_2 n_2}{m_1 n_1} \Bigr) \biggr)
    - \exp \biggl( i M(n_1 , n_2)
    \log \Bigl( \frac{m_2 n_2}{m_1 n_1} \Bigr) \biggr)}
    {i \log \Bigl(\frac{m_2 n_2}{m_1 n_1} \Bigr)} \\
  &= S_1 T - S_2 + S_3,
\end{align*}
say, where $M(n_1 , n_2) = \max(n_1 , n_2)$. First, we rewrite
\begin{align*}
  S_1 &= \sum_{2 \leq k \leq T}
    \biggl( \sum_{\substack{mn = k \\ m < n \le T}}
    m^{-\sigma_1} n^{-\sigma_2} \biggr)^2 +
    \sum_{T < k < T^2} \biggl( \sum_{\substack{mn = k \\ m < n \le T}}
    m^{-\sigma_1} n^{-\sigma_2} \biggr)^2 \\
  &= \sum^{\infty}_{k = 2}
    \biggl( \sum_{\substack{mn = k \\ m < n}}
    m^{-\sigma_1} n^{-\sigma_2} \biggr)^2
    - \sum_{k > T} \biggl( \sum_{\substack{mn = k \\ m < n}}
    m^{-\sigma_1} n^{-\sigma_2} \biggr)^2
    + \sum_{T < k < T^2} \biggl( \sum_{\substack{mn = k \\ m < n \leq T}}
    m^{-\sigma_1} n^{-\sigma_2} \biggr)^2.
\end{align*}
Since
\begin{align*}
  \sum_{k > T} \biggl( \sum_{mn = k} m^{-\sigma_1} n^{-\sigma_2} \biggr)^2
  &= \sum_{k > T} \biggl( \sum_{m|k}
    m^{-\sigma_1} m^{\sigma_2} k^{-\sigma_2} \biggr)^2 \\
  &\ll \sum_{k > T} k^{-2 \sigma_2 + \epsilon}
    \ll T^{1 - 2 \sigma_2 + \epsilon},
\end{align*}
we have
\[  S_1 = \zeta_2^{\Box}(\sigma_1, \sigma_2)
      + O(T^{1 - 2 \sigma_2 + \epsilon}). \]
Next, we rewrite
\begin{align*}
  S_2 &= \sum_{m_1 n_1 = m_2 n_2}
    \sum_{\substack{1 \leq m_1 \leq n_1 - 1 \\ 2 \leq n_1 \leq T}}
    \sum_{\substack{1 \leq m_2 \leq n_2 - 1 \\ 2 \leq n_2 \leq T}}
    n_1^{-\sigma_2} m_1^{-\sigma_1}
    n_2^{-\sigma_2} m_2^{-\sigma_1} M(n_1 , n_2) \\
  &= \sum_{2 \leq k \leq T}
    \sum_{\substack{m_1 n_1 = k \\ 1 \leq m_1 < n_1}}
    m_1^{-\sigma_1} n_1^{-\sigma_2}
    \biggl( \sum_{\substack{m_2 n_2 = k \\ 1 \leq m_2 < n_2 \\ n_1 < n_2}}
    n_2^{1 - \sigma_2} m_2^{-\sigma_1} +
    \sum_{\substack{m_2 n_2 = k \\ 1 \leq m_2 < n_2 \\ n_1 \geq n_2}}
    n_1 n_2^{-\sigma_2} m_2^{-\sigma_1} \biggr) \\
    & \ \ \ + \sum_{T < k < T^2}
    \sum_{\substack{m_1 n_1 = k \\ m_1 < n_1 \leq T}}
    m_1^{-\sigma_1} n_1^{-\sigma_2}
    \biggl( \sum_{\substack{m_2 n_2 = k \\ m_2 < n_2 \leq T \\ n_1 < n_2}}
    n_2^{1 - \sigma_2} m_2^{-\sigma_1} +
    \sum_{\substack{m_2 n_2 = k \\ m_2 < n_2 \leq T \\ n_1 \geq n_2}}
    n_1 n_2^{-\sigma_2} m_2^{-\sigma_1} \biggr) \\
  &= A_1 + A_2 + A_3 + A_4,
\end{align*}
say. Since
\begin{align*}
  A_1 &= \sum_{2 \leq k \leq T}
    \sum_{\substack{m_1 n_1 = k \\ 1 \leq m_1 < n_1}}
    m_1^{-\sigma_1} n_1^{-\sigma_2}
    \biggl(\sum_{\substack{m_2 n_2 = k \\ 1 \leq m_2 < n_2 \\ n_1 < n_2}}
    n_2^{1 - \sigma_2} m_2^{-\sigma_1} \biggr) \\
  &= \sum_{2 \leq k \leq T}
    \sum_{\substack{m_1 n_1 = k \\ 1 \leq m_1 < n_1}}
    m_1^{-\sigma_1} n_1^{-\sigma_2}
    \sum_{\substack{m_2 | k \\ m_2 < \sqrt{k} \\ m_2 < \frac{k}{n_1}}}
    k^{1 - \sigma_2} m_2^{\sigma_2 - 1 - \sigma_1} \\
  &\ll \sum_{2 \leq k \leq T} k^{1 - \sigma_2 + \epsilon}
    \sum_{\substack{m_1 | k \\ m_1 < \sqrt{k}}}
    m_1^{\sigma_2 - \sigma_1} k^{-\sigma_2} \\
  &\ll \sum_{2 \leq k \leq T} k^{1 - 2 \sigma_2 + \epsilon}
    \ll T^{2 - 2 \sigma_2 + \epsilon},
\end{align*}

\begin{align*}
  A_2 &= \sum_{2 \leq k \leq T}
    \sum_{\substack{m_1 n_1 = k \\ 1 \leq m_1 < n_1}}
    m_1^{-\sigma_1} n_1^{1 - \sigma_2}
    \sum_{\substack{m_2 n_2 = k \\ 1 \leq m_2 < n_2 \\ n_1 \geq n_2}}
    n_2^{- \sigma_2} m_2^{-\sigma_1} \\
  &= \sum_{2 \leq k \leq T}
    \sum_{\substack{m_1 n_1 = k \\ 1 \leq m_1 < n_1}}
    m_1^{-\sigma_1} n_1^{-\sigma_2 + 1}
    \sum_{\substack{m_2 | k \\ 1 \leq m_2 < \sqrt{k} \\ \frac{k}{m_2} \le n_1}}
    k^{- \sigma_2} m_2^{\sigma_2 - \sigma_1} \\
  &\ll \sum_{2 \leq k \leq T}
    \sum_{\substack{m_1 n_1 = k \\ 1 \leq m_1 < n_1}}
    m_1^{-\sigma_1} n_1^{-\sigma_2 + 1} k^{-\sigma_2 + \epsilon} \\
  &= \sum_{2 \leq k \leq T} \sum_{\substack{m_1 | k \\ m_1 < \sqrt{k}}}
    m_1^{-\sigma_1 + \sigma_2 - 1} k^{-2 \sigma_2 + 1 + \epsilon} \\
  &\ll \sum_{2 \leq k \leq T} k^{-2 \sigma_2 + 1 + \epsilon}
    \ll T^{2 - 2 \sigma_2 + \epsilon},
\end{align*}

\begin{align*}
  A_3 &= \sum_{T < k < T^2}
    \sum_{\substack{m_1 n_1 = k \\ m_1 < n_1 \leq T}}
    m_1^{-\sigma_1} n_1^{-\sigma_2}
    \sum_{\substack{m_2 n_2 = k \\ m_2 < n_2 \leq T \\ n_1 < n_2}}
    n_2^{1 - \sigma_2} m_2^{-\sigma_1} \\
  &= \sum_{T < k < T^2}
    \sum_{\substack{m_1 n_1 = k \\ m_1 < n_1 \leq T}}
    m_1^{-\sigma_1} n_1^{-\sigma_2}
    \sum_{\substack{m_2 | k \\ m_2 < \sqrt{k} \\ n_1 < \frac{k}{m_2} \leq T}}
    k^{1 - \sigma_2} m_2^{-1 + \sigma_2 - \sigma_1} \\
  &\ll \sum_{T < k < T^2}
    \sum_{\substack{m_1 n_1 = k \\ m_1 < n_1 \leq T}}m_1^{-\sigma_1}
    n_1^{-\sigma_2} k^{1 - \sigma_2}
    \biggl( \frac{k}{T} \biggr)^{-1 + \sigma_2 - \sigma_1}
    k^{\epsilon} \\
  &= T^{1 - \sigma_2 + \sigma_1}
    \sum_{T < k < T^2} k^{-\sigma_1 + \epsilon}
    \sum_{\substack{m_1 | k \\ m_1 < \sqrt{k} \\ \frac{k}{T} \leq m_1}}
    m_1^{-\sigma_1} k^{-\sigma_2} m_1^{\sigma_2} \\
  &\ll T^{1 - \sigma_2 + \sigma_1} \sum_{T < k < T^2}
    k^{-\sigma_1 - \sigma_2 + \epsilon}
    \biggl( \frac{k}{T} \biggr)^{-\sigma_1 + \sigma_2} \\
  &= T^{1 - 2 \sigma_2 + 2 \sigma_1} \sum_{T < k < T^2}
    k^{-2 \sigma_1 + \epsilon}
    \ll T^{2 - 2 \sigma_2 + \epsilon}
\end{align*}
and
\begin{align*}
  A_4 &= \sum_{T < k < T^2}
    \sum_{\substack{m_1 n_1 = k \\ m_1 < n_1 \leq T}}
    m_1^{-\sigma_1} n_1^{1 - \sigma_2}
    \sum_{\substack{m_2 n_2 = k \\ m_2 < n_2 \leq T \\ n_2 \leq n_1}}
    n_2^{-\sigma_2} m_2^{-\sigma_1} \\
  &= \sum_{T < k < T^2}
    \sum_{\substack{m_1 n_1 = k \\ m_1 < n_1 \leq T}}
    m_1^{-\sigma_1} n_1^{1 - \sigma_2}
    \sum_{\substack{m_2 | k \\ \frac{k}{T} \leq m_2 < \sqrt{k} \\
    \frac{k}{n_1} \leq m_2}}
    k^{-\sigma_2} m_2^{\sigma_2 - \sigma_1} \\
  &\ll \sum_{T < k < T^2}
    \sum_{\substack{m_1 n_1 = k \\ m_1 < n_1 \leq T}}
    m_1^{-\sigma_1} n_1^{1 - \sigma_2} k^{-\sigma_2}
    \biggl( \frac{k}{n_1} \biggr)^{\sigma_2 - \sigma_1} k^{\epsilon} \\
  &= \sum_{T < k < T^2}
    \sum_{\substack{m_1 n_1 = k \\ m_1 < n_1 \leq T}}
    m_1^{-\sigma_1} n_1^{1 + \sigma_1 - 2 \sigma_2}
    k^{-\sigma_1 + \epsilon} \\
  &= \sum_{T < k < T^2}
    \sum_{\substack{m_1 | k \\ m_1 < \sqrt{k} \\ \frac{k}{T} \leq m_1}}
    m_1^{-\sigma_1} \biggl( \frac{k}{m_1} \biggr)^{1 + \sigma_1 - 2 \sigma_2}
    k^{-\sigma_1 + \epsilon} \\
  &= \sum_{T < k < T^2}
    \sum_{\substack{m_1 | k \\ m_1 < \sqrt{k} \\ \frac{k}{T} \leq m_1}}
    k^{1 - 2 \sigma_2 + \epsilon} m_1^{-1 - 2 \sigma_1 + 2 \sigma_2} \\
  &\ll \sum_{T < k < T^2} k^{1 - 2 \sigma_2 + \epsilon}
    \biggl( \frac{k}{T} \biggr)^{-1 - 2 \sigma_1 + 2 \sigma_2} \\
  &= T^{1 + 2 \sigma_1 - 2 \sigma_2} \sum_{T < k < T^2}
    k^{-2 \sigma_1 + \epsilon}
    \ll T^{2 - 2 \sigma_2 + \epsilon},
\end{align*}
we have $S_2 \ll T^{2 - 2 \sigma_2 + \epsilon}$.
Next, we have
\begin{align*}
  S_{3} &=\sum_{m_1 n_1 \neq m_2 n_2}
    \sum_{\substack{1 \leq m_1 \leq n_1 - 1 \\ 2 \leq n_1 \leq T}}
    \sum_{\substack{1 \leq m_2 \leq n_2 - 1 \\ 2 \leq n_2 \leq T}}
    n_1^{-\sigma_2} m_1^{-\sigma_1}
    n_2^{-\sigma_2} m_2^{-\sigma_1} \\
  & \ \ \ \ \ \ \ \times
    \frac{\exp \biggl(iT \log \Bigl( \frac{m_2 n_2}{m_1 n_1} \Bigr) \biggr)
    - \exp \biggl( i M(n_1 , n_2)
    \log \Bigl( \frac{m_2 n_2}{m_1 n_1} \Bigr) \biggr)}
    {i \log \Bigl(\frac{m_2 n_2}{m_1 n_1} \Bigr)} \\
  &\ll \sum_{m_1 n_1 < m_2 n_2}
    \sum_{\substack{1 \leq m_1 \leq n_1 - 1 \\ 2 \leq n_1 \leq T}}
    \sum_{\substack{1 \leq m_2 \leq n_2 - 1 \\ 2 \leq n_2 \leq T}}
    n_1^{-\sigma_2} m_1^{-\sigma_1}n_2^{-\sigma_2} m_2^{-\sigma_1}
    \frac{1}{\log \left(\frac{m_{2}n_{2}}{m_{1}n_{1}}\right)} \\
  &= \sum_{m_1 n_1 < m_2 n_2<2m_{1}n_{1}}
    \sum_{\substack{1 \leq m_1 \leq n_1 - 1 \\ 2 \leq n_1 \leq T}}
    \sum_{\substack{1 \leq m_2 \leq n_2 - 1 \\ 2 \leq n_2 \leq T}}
    n_1^{-\sigma_2} m_1^{-\sigma_1}n_2^{-\sigma_2} m_2^{-\sigma_1}
    \frac{1}{\log \left(\frac{m_{2}n_{2}}{m_{1}n_{1}}\right)} \\
  &+ \sum_{m_{2}n_{2}\geq 2m_{1}n_{1}}
    \sum_{\substack{1 \leq m_1 \leq n_1 - 1 \\ 2 \leq n_1 \leq T}}
    \sum_{\substack{1 \leq m_2 \leq n_2 - 1 \\ 2 \leq n_2 \leq T}}
    n_1^{-\sigma_2} m_1^{-\sigma_1}n_2^{-\sigma_2} m_2^{-\sigma_1}
    \frac{1}{\log \left(\frac{m_{2}n_{2}}{m_{1}n_{1}}\right)} \\
  &= B_{1} + B_{2},
\end{align*}
say. In order to evaluate $B_2$, we write
\begin{align*}
  B_{2} &\ll \sum_{m_{2}n_{2}\geq 2m_{1}n_{1}}
    \sum_{\substack{1 \leq m_1 \leq n_1 - 1 \\ 2 \leq n_1 \leq T}}
    \sum_{\substack{1 \leq m_2 \leq n_2 - 1 \\ 2 \leq n_2 \leq T}}
    n_1^{-\sigma_2} m_1^{-\sigma_1}n_2^{-\sigma_2} m_2^{-\sigma_1} \\
  &= \sum_{\substack{m_{2}n_{2}\geq 2m_{1}n_{1}\\ 2m_{1}n_{1}\leq T}}
    \sum_{\substack{1 \leq m_1 \leq n_1 - 1 \\ 2 \leq n_1 \leq T}}
    \sum_{\substack{1 \leq m_2 \leq n_2 - 1 \\ 2 \leq n_2 \leq T}}
    n_1^{-\sigma_2} m_1^{-\sigma_1}n_2^{-\sigma_2} m_2^{-\sigma_1} \\
  &+ \sum_{\substack{m_{2}n_{2}\geq 2m_{1}n_{1}\\ T<2m_{1}n_{1}\leq 2T}}
    \sum_{\substack{1 \leq m_1 \leq n_1 - 1 \\ 2 \leq n_1 \leq T}}
    \sum_{\substack{1 \leq m_2 \leq n_2 - 1 \\ 2\leq n_2\leq T} }
    n_1^{-\sigma_2} m_1^{-\sigma_1}n_2^{-\sigma_2} m_2^{-\sigma_1} \\
  &+ \sum_{\substack{m_{2}n_{2}\geq 2m_{1}n_{1}\\ m_{1}n_{1}>T}}
    \sum_{\substack{1 \leq m_1 \leq n_1 - 1 \\ 2 \leq n_1 \leq T}}
    \sum_{\substack{1 \leq m_2 \leq n_2 - 1 \\ 2\leq n_2\leq T} }
    n_1^{-\sigma_2} m_1^{-\sigma_1}n_2^{-\sigma_2} m_2^{-\sigma_1} \\
  &= C_{1}+C_{2}+C_{3},
\end{align*}
say. Now, let $j \in \mathbb{N}$. If $j \le T$ then we have
\[ \sum_{\substack{m_2 n_2 = j \\
     1\leq m_{2}\leq n_{2}-1\\ 2\leq n_{2}\leq T}}
     m_{2}^{-\sigma_{1}}n_{2}^{-\sigma_{2}} =
     \sum_{\substack{m_{2}|j\\ m_{2}<\sqrt{j}}}
     m_{1}^{-\sigma_{1}+\sigma_{2}}j^{-\sigma_{2}}
     \ll j^{-\sigma_{2}+\epsilon} \]
and if $T < j < T^2$ then we have
\begin{align*}
  \sum_{\substack{m_2 n_2 = j \\ 1\leq m_{2}n_{2}\leq n_{2}-1\\
    2\leq n_{2}\leq T}} m_{2}^{-\sigma_{1}}n_{2}^{-\sigma_{2}}
    &= \sum_{\substack{m_{2}|j\\ \frac{j}{T}\leq m_{2}<\sqrt{j}}}
    m_{2}^{-\sigma_{1}+\sigma_{2}}j^{-\sigma_{2}} \\
  &\ll \left(\frac{j}{T}\right)^{-\sigma_{1}+\sigma_{2}}
    j^{-\sigma_{2}+\epsilon} \\
  &= j^{-\sigma_{1}+\epsilon}T^{\sigma_{1}-\sigma_{2}}.
\end{align*}
From these evaluations, if we set
\begin{equation}  \label{eq_lambda}
\lambda = \begin{cases}
            1 - \sigma_2 + \epsilon & (\sigma_1 > 1), \\
            2 - \sigma_1 - \sigma_2 + \epsilon & (\sigma_1 \le 1)
          \end{cases}
\end{equation}
then we obtain
\begin{align*}
  C_{1} &\ll \sum_{2\leq k \leq \frac{T}{2}}
    \sum_{\substack{m_{1}|k\\ m_{1}<\sqrt{k}}}
    m_{1}^{-\sigma_{1}+\sigma_{2}}k^{-\sigma_{2}}
    \left(\sum_{2k\leq j_{1}\leq T}j_{1}^{-\sigma_{2}+\epsilon}
    +\sum_{T<j_{2}<T^{2}}
    j_{2}^{-\sigma_{1}+\epsilon}T^{\sigma_{1}-\sigma_{2}}\right)\\
  &\ll \sum_{2\leq k \leq \frac{T}{2}}
    \sum_{\substack{m_{1}|k\\ m_{1}<\sqrt{k}}}
    m_{1}^{-\sigma_{1}+\sigma_{2}}k^{-\sigma_{2}}T^{\lambda}\\
  &\ll T^{\lambda} \sum_{2\leq k\leq \frac{T}{2}}
    k^{-\sigma_{2}+\epsilon} \\
  &\ll \begin{cases}
         T^{2-2\sigma_{2}+\epsilon} & (\sigma_1 > 1), \\
         T^{3 - \sigma_1 - 2 \sigma_2 + \epsilon} & (\sigma_1 \le 1).
       \end{cases}
\end{align*}
By the same argument, we obtain
\begin{align*}
  C_{2} &\ll \sum_{\frac{T}{2}<k\leq T}
    \sum_{\substack{m_{1}|k \\ m_{1}<\sqrt{k}}}
    m_{1}^{-\sigma_{1}+\sigma_{2}}k^{-\sigma_{2}}
    \sum_{T<j<T^{2}}j^{-\sigma_{1}+\epsilon}T^{\sigma_{1}-\sigma_{2}} \\
  &\ll \begin{cases}
         T^{2-2\sigma_{2}+\epsilon} & (\sigma_1 > 1), \\
         T^{3 - \sigma_1 - 2 \sigma_2 + \epsilon} & (\sigma_1 \le 1).
       \end{cases}
\end{align*}
If $\lambda$ is the same as in (\ref{eq_lambda}) then we obtain
\begin{align*}
  C_{3} &\ll \sum_{T<k<T^{2}}
    \sum_{\substack{m_{1}|k\\ m_{1}<\sqrt{k}\\ \frac{k}{T} \le m_{1}}}
    m_{1}^{-\sigma_{1}+\sigma_{2}}k^{-\sigma_{2}}
    \sum_{T<j<T^{2}}j^{-\sigma_{1}+\epsilon}T^{\sigma_{1}-\sigma_{2}}\\
  &\ll T^{\lambda}\sum_{T<k<T^{2}}k^{-\sigma_{2}}
    \sum_{\substack{m_{1}|k\\ m_{1}<\sqrt{k}\\ \frac{k}{T} \le m_{1}}}
    m_{1}^{-\sigma_{1}+\sigma_{2}}\\
  &\ll T^{\lambda + \sigma_1 - \sigma_2}
    \sum_{T<k<T^{2}}k^{-\sigma_{1} + \epsilon} \\
  &\ll \begin{cases}
         T^{2-2\sigma_{2}+\epsilon} & (\sigma_1 > 1), \\
         T^{4 - 2 \sigma_1 - 2 \sigma_2 + \epsilon} & (\sigma_1 \le 1).
       \end{cases}
\end{align*}
Hence, we have
\[ B_2 \ll
     \begin{cases}
       T^{2-2\sigma_{2}+\epsilon} & (\sigma_1 > 1), \\
       T^{4 - 2 \sigma_1 - 2 \sigma_2 + \epsilon} & (\sigma_1 \le 1).
     \end{cases} \]
In order to evaluate $B_1$, we rewrite
\begin{align*}
  B_1 &= \sum_{\substack{m_1 n_1 < m_2 n_2<2m_{1}n_{1}\\ 2m_{1}n_{1}<T}}
    \sum_{\substack{1 \leq m_1 \leq n_1 - 1 \\ 2 \leq n_1 \leq T}}
    \sum_{\substack{1 \leq m_2 \leq n_2 - 1 \\ 2 \leq n_2 \leq T}}
    n_1^{-\sigma_2} m_1^{-\sigma_1}n_2^{-\sigma_2} m_2^{-\sigma_1}
    \frac{1}{\log \left(\frac{m_{2}n_{2}}{m_{1}n_{1}}\right)} \\
  &+ \sum_{\substack{m_1 n_1 < m_2 n_2<2m_{1}n_{1}\\
    \frac{T}{2}\leq m_{1}n_{1} \le T}}
    \sum_{\substack{1 \leq m_1 \leq n_1 - 1 \\ 2 \leq n_1 \leq T}}
    \sum_{\substack{1 \leq m_2 \leq n_2 - 1 \\ 2 \leq n_2 \leq T}}
    n_1^{-\sigma_2} m_1^{-\sigma_1}n_2^{-\sigma_2} m_2^{-\sigma_1}
    \frac{1}{\log \left(\frac{m_{2}n_{2}}{m_{1}n_{1}}\right)} \\
  &+ \sum_{\substack{m_1 n_1 < m_2 n_2<2m_{1}n_{1} \\ m_1 n_1 >T}}
    \sum_{\substack{1 \leq m_1 \leq n_1 - 1 \\ 2 \leq n_1 \leq T}}
    \sum_{\substack{1 \leq m_2 \leq n_2 - 1 \\ 2 \leq n_2 \leq T}}
    n_1^{-\sigma_2} m_1^{-\sigma_1}n_2^{-\sigma_2} m_2^{-\sigma_1}
    \frac{1}{\log \left(\frac{m_{2}n_{2}}{m_{1}n_{1}}\right)} \\
  &= D_{1}+D_{2}+D_{3},
\end{align*}
say. Now, we set $n_{2}=\frac{m_{1}n_{1}+r}{m_{2}}$. If $m_2 n_2 \le T$
then we have
\begin{align*}
  &\sum_{\substack{m_{2}|m_{1}n_{1}+r\\ m_{2}<\sqrt{m_{1}n_{1}+r}}}
    m_{2}^{-\sigma_{1}}n_{2}^{-\sigma_{2}}n_{1}^{-\sigma_{2}}
    m_{1}^{-\sigma_{1}}
    \frac{1}{\log\left(\frac{n_{2}m_{2}}{n_{1}m_{1}}\right)}\\
  &\ll \sum_{\substack{m_{2}|m_{1}n_{1}+r\\ m_{2}<\sqrt{m_{1}n_{1}+r}}}
    m_{2}^{\sigma_{2}-\sigma_{1}}(m_{1}n_{1}+r)^{-\sigma_{2}}
    \frac{n_{1}m_{1}}{r}m_{1}^{-\sigma_{1}}n_{1}^{-\sigma_{2}}\\
  &\ll (m_{1}n_{1}+r)^{-\sigma_{2}+\epsilon}
    \frac{m_{1}^{1-\sigma_{1}}n_{1}^{1-\sigma_{2}}}{r}
\end{align*}
and if $m_2 n_2 > T$ then we have
\begin{align*}
  &\sum_{\substack{m_{2}|m_{1}n_{1}+r\\ m_{2}<\sqrt{m_{1}n_{1}+r}\\
    \frac{m_{1}n_{1}+r}{T}\leq m_{2}}}
    m_{2}^{-\sigma_{1}}n_{2}^{-\sigma_{2}}n_{1}^{-\sigma_{2}}
    m_{1}^{-\sigma_{1}}
    \frac{1}{\log\left(\frac{n_{2}m_{2}}{n_{1}m_{1}}\right)}\\
  &\ll \sum_{\substack{m_{2}|m_{1}n_{1}+r\\ m_{2}<\sqrt{m_{1}n_{1}+r}\\
    \frac{m_{1}n_{1}+r}{T}\leq m_{2}}}
    m_{2}^{-\sigma_{1}}
    \left(\frac{m_{1}n_{1}+r}{m_{2}}\right)^{-\sigma_{2}}
    \frac{m_{1}^{1-\sigma_{1}}n_{1}^{1-\sigma_{2}}}{r}\\
  &\ll \left(\frac{m_{1}n_{1}+r}{T}\right)^{\sigma_{2}-\sigma_{1}}
    (m_{1}n_{1}+r)^{-\sigma_{2}+\epsilon}
    \frac{m_{1}^{1-\sigma_{1}}n_{1}^{1-\sigma_{2}}}{r}.
\end{align*}
From these evaluations and Remark \ref{rem_sumint}, we have
\begin{align*}
  D_{1} &\ll \sum_{2\leq k < \frac{T}{2}}
    \sum_{\substack{m_{1}|k\\m_{1}<\sqrt{k}}}
    \sum_{r=1}^{k}(k+r)^{-\sigma_{2}+\epsilon}
    \frac{k^{1-\sigma_{2}}}{r}m_{1}^{\sigma_{2}-\sigma_{1}}\\
  &\ll \sum_{2\leq k < \frac{T}{2}}
    \sum_{\substack{m_{1}|k\\m_{1}<\sqrt{k}}}
    k^{-2\sigma_{2}+1+\epsilon}m_{1}^{\sigma_{2}-\sigma_{1}}
    \ll T^{-2\sigma_{2}+2+\epsilon}
\end{align*}
and
\begin{align*}
  D_{2} &\ll \sum_{\frac{T}{2}\leq k \le T}
    \sum_{\substack{m_{1}|k\\m_{1}<\sqrt{k}}}
    \sum_{1\leq r\leq T-k}(k+r)^{-\sigma_{2}+\epsilon}
    \frac{km_{1}^{-\sigma_{1}}}{r}
    \left(\frac{k}{m_{1}}\right)^{-\sigma_{2}}+\\
  &+ \sum_{\frac{T}{2}\leq k \le T}
    \sum_{\substack{m_{1}|k\\m_{1}<\sqrt{k}}}
    \sum_{T-k < r\leq k}\left(\frac{k+r}{T}\right)^{\sigma_{2}-\sigma_{1}}
    (k+r)^{-\sigma_{2}+\epsilon}
    \frac{m_{1}^{1-\sigma_{1}}\left(\frac{k}{m_{1}}\right)^{1-\sigma_{2}}}{r}\\
  &= E_{1}+E_{2},
\end{align*}
say. By Remark \ref{rem_sumint}, we obtain
\begin{equation*}
  E_{1} \ll \sum_{\frac{T}{2}\leq k \le T}
    \sum_{\substack{m_{1}|k\\m_{1}<\sqrt{k}}}
    k^{1-2\sigma_{2}+\epsilon}m_{1}^{\sigma_{2}-\sigma_{1}}
    \ll\sum_{\frac{T}{2}\leq k \le T}k^{1-2\sigma_{2}+\epsilon}
    \ll T^{2-2\sigma_{2}+\epsilon}
\end{equation*}
and
\begin{align*}
  E_{2} &\ll \sum_{\frac{T}{2}\leq k \le T}
    \sum_{\substack{m_{1}|k\\m_{1}<\sqrt{k}}}
    \sum_{T-k \leq r \leq k} T^{\sigma_{1}-\sigma_{2}}
    (k+r)^{-\sigma_{1}+\epsilon}m_{1}^{\sigma_{2}-\sigma_{1}}
    k^{1-\sigma_{2}}r^{-1}\\
  &\ll \sum_{\frac{T}{2}\leq k \le T}
    \sum_{\substack{m_{1}|k\\m_{1}<\sqrt{k}}}
    T^{\sigma_{1}-\sigma_{2}}T^{-\sigma_{1}+\epsilon}
    m_{1}^{\sigma_{2}-\sigma_{1}}k^{1-\sigma_{2}}\\
  &\ll T^{2-2\sigma_{2}+\epsilon}.
\end{align*}
Hence, we have $D_{2}\ll T^{2-2\sigma_{2}+\epsilon}$.
Lastly, we evaluate $D_3$. By Remark \ref{rem_sumint},
\begin{align*}
  D_{3} &= \sum_{T<m_{1}n_{1}<m_{2}n_{2}<2m_{1}n_{1}}
    \sum_{\substack{1\leq m_{1}\leq n_{1}-1\\ 2\leq n_{1}\leq T}}
    \sum_{\substack{1\leq m_{2}\leq n_{2}-1\\ 2\leq n_{2}\leq T}}
    m_{2}^{-\sigma_{1}}n_{2}^{-\sigma_{2}}n_{1}^{-\sigma_{2}}
    m_{1}^{-\sigma_{1}}
    \frac{1}{\log\left(\frac{n_{2}m_{2}}{n_{1}m_{1}}\right)}\\
  &\ll \sum_{T<m_{1}n_{1}<m_{2}n_{2}<2m_{1}n_{1}}
    \sum_{\substack{1\leq m_{1}\leq n_{1}-1\\ 2\leq n_{1}\leq T}}
    \sum_{r=1}^{m_{1}n_{1}}
    \sum_{\substack{m_{2}|m_{1}n_{1}+r\\ m_{2}<\sqrt{m_{1}n_{1}+r}\\
    \frac{m_{1}n_{1}+r}{T}<m_{2}}}
    m_{1}^{-\sigma_{1}}n_{1}^{-\sigma_{2}}m_{2}^{-\sigma_{1}}
    \left(\frac{m_{1}n_{1}+r}{m_{2}}\right)^{-\sigma_{2}}
    \frac{m_{1}n_{1}}{r}\\
  &\ll \sum_{T<m_{1}n_{1}<m_{2}n_{2}<2m_{1}n_{1}}
    \sum_{\substack{1\leq m_{1}\leq n_{1}-1\\ 2\leq n_{1}\leq T}}
    \sum_{r=1}^{m_{1}n_{1}}
    \left(\frac{m_{1}n_{1}+r}{T}\right)^{\sigma_{2}-\sigma_{1}}
    m_{1}^{1-\sigma_{1}}n_{1}^{1-\sigma_{2}}r^{-1}
    (m_{1}n_{1}+r)^{-\sigma_{2} + \epsilon}\\
  &\ll \sum_{T\leq k < T^{2}}
    \sum_{\substack{m_{1}|k\\ \frac{k}{T}<m_{1}<\sqrt{k}}}
    \sum_{r=1}^{k} T^{\sigma_{1}-\sigma_{2}}(k+r)^{-\sigma_{1} + \epsilon}
    m_{1}^{\sigma_{2}-\sigma_{1}}k^{1-\sigma_{2}}r^{-1}\\
  &\ll \sum_{T\leq k < T^{2}}
    \sum_{\substack{m_{1}|k\\ \frac{k}{T}<m_{1}<\sqrt{k}}}
    k^{-\sigma_{1}+\epsilon}T^{\sigma_{1}-\sigma_{2}}
    m_{1}^{\sigma_{2}-\sigma_{1}}k^{1-\sigma_{2}}\\
  &\ll \sum_{T\leq k < T^{2}}
    k^{1-\sigma_{1}-\sigma_{2}+\epsilon}
    \left(\frac{k}{T}\right)^{\sigma_{2}-\sigma_{1}}T^{\sigma_{1}-\sigma_{2}}\\
  &\ll
    \begin{cases}
      T^{2-2\sigma_{2}+\epsilon} & (\sigma_1 > 1), \\
      T^{4 - 2 \sigma_1 - 2 \sigma_2 + \epsilon} & (\sigma_1 \le 1).
    \end{cases}
\end{align*}
Hence, we have
\[ B_1 \ll
    \begin{cases}
      T^{2-2\sigma_{2}+\epsilon} & (\sigma_1 > 1), \\
      T^{4 - 2 \sigma_1 - 2 \sigma_2 + \epsilon} & (\sigma_1 \le 1).
    \end{cases} \]
This implies
\[ S_3 \ll
    \begin{cases}
      T^{2-2\sigma_{2}+\epsilon} & (\sigma_1 > 1), \\
      T^{4 - 2 \sigma_1 - 2 \sigma_2 + \epsilon} & (\sigma_1 \le 1).
    \end{cases} \]
Therefore we have
\[ \int_{2}^{T}|A(s_{1},s_{2})|^{2}dt = \zeta_2^{\Box}(\sigma_1, \sigma_2)T +
   \begin{cases}
     O(T^{2-2\sigma_{2}+\epsilon}) & (\sigma_1 >1), \\
     O(T^{4 - 2 \sigma_1 - 2 \sigma_2 + \epsilon}) & (\sigma_1 \le 1).
   \end{cases} \]
Now, if we set
\[ \lambda =
     \begin{cases}
       - \sigma_2 & (\sigma_1 > 1), \\
       - \sigma_2  + \epsilon & (\sigma_1 = 1), \\
       1 - \sigma_1 - \sigma_2 & (\sigma_1 < 1)
     \end{cases} \]
then we have
\begin{align*}
  \int_{2}^{T}|\zeta(s_{1},s_{2})|^{2}dt &=
    \int_{2}^{T}|A(s_{1},s_{2})+O(t^{\lambda})|^{2}dt\\
  &= \int_{2}^{T}|A(s_{1},s_{2})|^{2}dt +
    O\left(\int_{2}^{T}|A(s_{1},s_{2})t^{\lambda}|dt\right) + O(1).
\end{align*}
By the Cauchy-Schwarz inequality, we have
\begin{align*}
  \int_{2}^{T}|A(s_{1},s_{2})t^{\lambda}|dt
  &\ll \left(\int_{2}^{T}|A(s_{1},s_{2})|^{2}dt\right)^{\frac{1}{2}}
    \left(\int_{2}^{T}t^{2 \lambda}dt\right)^{\frac{1}{2}}\\
  &\ll T^{\frac{1}{2}}.
\end{align*}
This implies the theorems.
\end{proof}

Next, we consider the case $\sigma_1 \le \sigma_2$.

\begin{proof}[Proof of Theorem \ref{th_intt} for $\sigma_1 \le \sigma_2$]
Hereafter we use the same notations as in the previous proof.
First we evaluate $S_1$. Since
\begin{align*}
  \sum_{k > T} \biggl( \sum_{\substack{mn = k \\ m<n}}
    m^{-\sigma_1} n^{-\sigma_2} \biggr)^2
  &= \sum_{k > T} \biggl( \sum_{\substack{m|k \\ m < \sqrt{k}}}
    m^{-\sigma_1} m^{\sigma_2} k^{-\sigma_2} \biggr)^2 \\
  &\ll \sum_{k > T} k^{-2 \sigma_2} \bigl( k^{\frac{1}{2}
    (\sigma_2 - \sigma_1) + \epsilon} \bigr)^2 \\
  &\ll \sum_{k > T} k^{- \sigma_1 - \sigma_2 + \epsilon}
    \ll T^{1 - \sigma_1 - \sigma_2 + \epsilon},
\end{align*}
we have
\[  S_1 = \zeta_2^{\Box}(\sigma_1, \sigma_2)
          + O(T^{1 - \sigma_1 - \sigma_2 + \epsilon}). \]
Next we evaluate $S_2$. Since
\begin{equation}  \label{ineq_divf}
  \sum_{\substack{mn = k \\ m < n}} m^{-\sigma_1} n^{-\sigma_2}
  = \sum_{\substack{m \mid k \\ m < \sqrt{k}}}
    m^{\sigma_2 - \sigma_1} k^{- \sigma_2}
  \ll k^{- \frac{1}{2}(\sigma_1 + \sigma_2) + \epsilon},
\end{equation}

\begin{align*}
  \sum_{\substack{mn = k \\ m < n}} m^{-\sigma_1} n^{1- \sigma_2}
  &= \sum_{\substack{m | k \\ m < \sqrt{k}}}
    k^{1 - \sigma_2} m^{\sigma_2 - \sigma_1 - 1} 
  &\ll \begin{cases}
        k^{1 - \sigma_2 + \epsilon} & (\sigma_2 - \sigma_1 - 1 \le 0) \\
        k^{\frac{1}{2}(1 - \sigma_1 - \sigma_2) + \epsilon}
          & (\sigma_2 - \sigma_1 - 1 > 0)
      \end{cases}
\end{align*}
hold, we have
\begin{align*}
  A_1, A_2 &\ll
    \begin{cases}
      \displaystyle{\sum_{2 \le k \le T}
        k^{- \frac{1}{2}(\sigma_1 + \sigma_2) + \epsilon}
        k^{1 - \sigma_2 + \epsilon}} & (\sigma_2 - \sigma_1 - 1 \le 0) \\
      \displaystyle{\sum_{2 \le k \le T}
        k^{- \frac{1}{2}(\sigma_1 + \sigma_2) + \epsilon}
        k^{\frac{1}{2}(1 - \sigma_1 - \sigma_2) + \epsilon}}
        & (\sigma_2 - \sigma_1 - 1 > 0)
    \end{cases} \\
    &= \begin{cases}
         \displaystyle{\sum_{2 \le k \le T}
           k^{1- \frac{1}{2} \sigma_1 - \frac{3}{2} \sigma_2 + \epsilon}}
           & (\sigma_2 - \sigma_1 - 1 \le 0) \\
         \displaystyle{\sum_{2 \le k \le T}
           k^{\frac{1}{2} - \sigma_1 - \sigma_2 + \epsilon}}
           & (\sigma_2 - \sigma_1 - 1 > 0).
       \end{cases}
\end{align*}
We note that
$1- \frac{1}{2} \sigma_1 - \frac{3}{2} \sigma_2 < -1$ is equivalent to
$\sigma_2 > - \frac{1}{3} \sigma_1 + \frac{4}{3}$. Hence we have
\[ A_1, A_2 \ll
     \begin{cases}
       T^{2 - \frac{1}{2} \sigma_1 - \frac{3}{2} \sigma_2 + \epsilon}
       & (\text{$\sigma_2 - \sigma_1 - 1 \le 0$ and
         $\sigma_2 \le - \frac{1}{3} \sigma_1 + \frac{4}{3}$}), \\
       1 & (\text{otherwise})
     \end{cases} \]
because $\sigma_1 + \sigma_2 > 3/2$. Similarly, we have
\begin{align*}
  A_3, A_4 &\ll
    \begin{cases}
      \displaystyle{\sum_{T < k < T^2}
        k^{1- \frac{1}{2} \sigma_1 - \frac{3}{2} \sigma_2 + \epsilon}}
        & (\sigma_2 - \sigma_1 - 1 \le 0)  \\
      \displaystyle{\sum_{T < k < T^2}
        k^{\frac{1}{2} - \sigma_1 - \sigma_2 + \epsilon}}
        & (\sigma_2 - \sigma_1 - 1 > 0)
    \end{cases} \\
    &\ll \begin{cases}
           T^{4 - \sigma_1 - 3 \sigma_2 + \epsilon}
           & (\text{$\sigma_2 - \sigma_1 - 1 \le 0$ and
             $\sigma_2 \le - \frac{1}{3} \sigma_1 + \frac{4}{3}$}) \\
           1 & (\text{otherwise}).
         \end{cases}
\end{align*}
Therefore we have
\[ S_2 \ll
     \begin{cases}
       T^{4 - \sigma_1 - 3 \sigma_2 + \epsilon}
       & (\text{$\sigma_2 - \sigma_1 - 1 \le 0$ and
         $\sigma_2 \le - \frac{1}{3} \sigma_1 + \frac{4}{3}$}) \\
       1 & (\text{otherwise}).
     \end{cases} \]
Next we evaluate $S_3$. If we set $m_2 n_2 = j$, by (\ref{ineq_divf})
and $-1 \le - \frac{1}{2}(\sigma_1 + \sigma_2) < - 3/4$,
we have
\begin{align*}
  C_1 &\ll \sum_{2 \le k \le \frac{T}{2}}
    \sum_{\substack{m_1 | k \\ m_1 < \sqrt{k}}}
      m_{1}^{-\sigma_{1}+\sigma_{2}}k^{-\sigma_{2}}
    \left( \sum_{2k \le j < T^2}
      j^{-\frac{1}{2}(\sigma_1 + \sigma_2) + \epsilon} \right) \\
  &\ll T^{2 - \sigma_1 - \sigma_2 + \epsilon}
    \sum_{2 \le k \le \frac{T}{2}}
      k^{- \frac{1}{2}(\sigma_1 + \sigma_2) + \epsilon} \\
  &\ll T^{3 - \frac{3}{2}(\sigma_1 + \sigma_2) + \epsilon}.
\end{align*}
Similarly, we have
$C_2 \ll T^{3 - \frac{3}{2}(\sigma_1 + \sigma_2) + \epsilon}$ and
\begin{align*}
  C_3 &\ll \sum_{T < k < T^2}
    \sum_{\substack{m_1 | k \\ \frac{k}{T} \le m_1 < \sqrt{k}}}
      m_{1}^{-\sigma_{1}+\sigma_{2}}k^{-\sigma_{2}}
    \left( \sum_{T \le j < T^2}
      j^{-\frac{1}{2}(\sigma_1 + \sigma_2) + \epsilon} \right) \\
  &\ll T^{2 - \sigma_1 - \sigma_2 + \epsilon}
    \sum_{T < k < T^2}
      k^{- \frac{1}{2}(\sigma_1 + \sigma_2) + \epsilon} \\
  &\ll T^{4 - 2 \sigma_1 - 2 \sigma_2 + \epsilon}.
\end{align*}
Since $4 - 2 \sigma_1 - 2 \sigma_2 - (3 - \frac{3}{2}(\sigma_1 + \sigma_2))
= 1 - \frac{1}{2}(\sigma_1 + \sigma_2) \ge 0$, we see that
$B_2 \ll T^{4 - 2 \sigma_1 - 2 \sigma_2 + \epsilon}$ holds.
Now, we set $m_2 n_2 = m_1 n_1 + r$ ($r \in \{1,2, \dots ,m_1 n_1 -1 \}$).
Since $x \asymp \log (1+x)$ for $x \in [0,1]$, we have
\begin{align*}
  &\sum_{\substack{m_{2}|m_{1}n_{1}+r\\ m_{2}<\sqrt{m_{1}n_{1}+r}}}
    m_{2}^{-\sigma_{1}}n_{2}^{-\sigma_{2}}n_{1}^{-\sigma_{2}}
    m_{1}^{-\sigma_{1}}
    \frac{1}{\log\left(\frac{n_{2}m_{2}}{n_{1}m_{1}}\right)}\\
  &\ll \sum_{\substack{m_{2}|m_{1}n_{1}+r\\ m_{2}<\sqrt{m_{1}n_{1}+r}}}
    m_{2}^{\sigma_{2}-\sigma_{1}}(m_{1}n_{1}+r)^{-\sigma_{2}}
    \frac{n_{1}m_{1}}{r}m_{1}^{-\sigma_{1}}n_{1}^{-\sigma_{2}}\\
  &\ll (m_1 n_1 + r)^{- \frac{1}{2}(\sigma_1 + \sigma_2) + \epsilon}
    \frac{m_1^{1- \sigma_1} n_1^{1- \sigma_2}}{r}.
\end{align*}
From this evaluation and Remark \ref{rem_sumint}, we have
\begin{align*}
  D_1 &= \sum_{\substack{m_1 n_1 = k \\ 2 \le k < \frac{T}{2} \\
    m_1 < n_1}} \sum_{r = 1}^{k-1}
    \sum_{\substack{m_{2}|m_{1}n_{1}+r\\ m_{2}<\sqrt{m_{1}n_{1}+r}}}
    m_{2}^{-\sigma_{1}}n_{2}^{-\sigma_{2}}n_{1}^{-\sigma_{2}}
    m_{1}^{-\sigma_{1}}
    \frac{1}{\log\left(\frac{n_{2}m_{2}}{n_{1}m_{1}}\right)}\\
  &\ll \sum_{2\leq k < \frac{T}{2}}
    \sum_{\substack{m_{1}|k\\m_{1}<\sqrt{k}}}
    \sum_{r=1}^{k} (k+r)^{- \frac{1}{2}(\sigma_1 + \sigma_2) + \epsilon}
    \frac{k^{1- \sigma_2}}{r} m_1^{\sigma_2 - \sigma_1} \\
  &\ll \sum_{2\leq k < \frac{T}{2}}
    \sum_{\substack{m_{1}|k\\m_{1}<\sqrt{k}}}
    m_1^{\sigma_2 - \sigma_1}
    k^{1 - \frac{1}{2} \sigma_1 - \frac{3}{2} \sigma_2 + \epsilon} \\
  &\ll \sum_{2\leq k < \frac{T}{2}}
    k^{\frac{1}{2}(\sigma_2 - \sigma_1) + \epsilon}
    k^{1 - \frac{1}{2} \sigma_1 - \frac{3}{2} \sigma_2 + \epsilon} \\
  &= \sum_{2\leq k < \frac{T}{2}}
    k^{1- \sigma_1 - \sigma_2 + \epsilon} \\
  &\ll T^{2 - \sigma_1 - \sigma_2}.
\end{align*}
Similarly, we have $D_2 \ll T^{2 - \sigma_1 - \sigma_2}$ and
\begin{align*}
  D_3 &= \sum_{\substack{m_1 n_1 = k \\ T < k < T^2 \\
    m_1 < n_1}} \sum_{r = 1}^{k-1}
    \sum_{\substack{m_{2}|m_{1}n_{1}+r\\ m_{2}<\sqrt{m_{1}n_{1}+r}}}
    m_{2}^{-\sigma_{1}}n_{2}^{-\sigma_{2}}n_{1}^{-\sigma_{2}}
    m_{1}^{-\sigma_{1}}
    \frac{1}{\log\left(\frac{n_{2}m_{2}}{n_{1}m_{1}}\right)}\\
  &\ll \sum_{T < k < T^2} k^{1- \sigma_1 - \sigma_2 + \epsilon} \\
  &\ll T^{4 - 2 \sigma_1 - 2 \sigma_2 + \epsilon}.
\end{align*}
Therefore we have $B_1 \ll T^{4 - 2 \sigma_1 - 2 \sigma_2 + \epsilon}$,
and we have $S_3 \ll T^{4 - 2 \sigma_1 - 2 \sigma_2 + \epsilon}$.
Since $4 - 2 \sigma_1 - 2 \sigma_2 - (4 - \sigma_1 - 3 \sigma_2) =
\sigma_2 - \sigma_1 \ge 0$, we have
\[ \int_{2}^{T}|A(s_{1},s_{2})|^{2}dt = \zeta_2^{\Box}(\sigma_1, \sigma_2)T +
     O(T^{4 - 2 \sigma_1 - 2 \sigma_2 + \epsilon}). \]
By the same argument as in the case $\sigma_1 > \sigma_2$, we obtain
the theorem.
\end{proof}

\begin{proof}[Proof of Theorem \ref{th_intt} for $\sigma_1 > 1$
and $\sigma_2 = 1/2$]
Since
\[ \frac{1}{|\zeta(\sigma)|} \le | \zeta(\sigma + it) | \le
   \zeta(\sigma) \]
for $\sigma > 1$ (see \cite{titchmarsh}), we have
\[ \int_2^T |\zeta(s_1)\zeta(s_2)|^2 dt \asymp T \log T. \]
We have proved
\[ \int_2^T |\zeta_2(s_2, s_1)|^2 dt = O(T) \]
in this section. From these evaluations, we can obtain
\[ I^{\Box}(T) \asymp T \log T \]
by the same argument as in the proof of Theorem \ref{th_intt2}
for $\sigma_1 > 1$ and $\sigma_2 = 1/2$.
\end{proof}

\end{document}